\begin{document}

\newtheorem{theorem}[subsection]{Theorem}
\newtheorem{proposition}[subsection]{Proposition}
\newtheorem{lemma}[subsection]{Lemma}
\newtheorem{corollary}[subsection]{Corollary}
\newtheorem{conjecture}[subsection]{Conjecture}
\newtheorem{prop}[subsection]{Proposition}
\numberwithin{equation}{section}
\newcommand{\mr}{\ensuremath{\mathbb R}}
\newcommand{\mc}{\ensuremath{\mathbb C}}
\newcommand{\dif}{\mathrm{d}}
\newcommand{\intz}{\mathbb{Z}}
\newcommand{\ratq}{\mathbb{Q}}
\newcommand{\natn}{\mathbb{N}}
\newcommand{\comc}{\mathbb{C}}
\newcommand{\rear}{\mathbb{R}}
\newcommand{\prip}{\mathbb{P}}
\newcommand{\uph}{\mathbb{H}}
\newcommand{\fief}{\mathbb{F}}
\newcommand{\majorarc}{\mathfrak{M}}
\newcommand{\minorarc}{\mathfrak{m}}
\newcommand{\sings}{\mathfrak{S}}
\newcommand{\fA}{\ensuremath{\mathfrak A}}
\newcommand{\mn}{\ensuremath{\mathbb N}}
\newcommand{\mq}{\ensuremath{\mathbb Q}}
\newcommand{\half}{\tfrac{1}{2}}
\newcommand{\f}{f\times \chi}
\newcommand{\summ}{\mathop{{\sum}^{\star}}}
\newcommand{\chiq}{\chi \bmod q}
\newcommand{\chidb}{\chi \bmod db}
\newcommand{\chid}{\chi \bmod d}
\newcommand{\sym}{\text{sym}^2}
\newcommand{\hhalf}{\tfrac{1}{2}}
\newcommand{\sumstar}{\sideset{}{^*}\sum}
\newcommand{\sumprime}{\sideset{}{'}\sum}
\newcommand{\sumprimeprime}{\sideset{}{''}\sum}
\newcommand{\sumflat}{\sideset{}{^\flat}\sum}
\newcommand{\shortmod}{\ensuremath{\negthickspace \negthickspace \negthickspace \pmod}}
\newcommand{\V}{V\left(\frac{nm}{q^2}\right)}
\newcommand{\sumi}{\mathop{{\sum}^{\dagger}}}
\newcommand{\mz}{\ensuremath{\mathbb Z}}
\newcommand{\leg}[2]{\left(\frac{#1}{#2}\right)}
\newcommand{\muK}{\mu_{\omega}}
\newcommand{\thalf}{\tfrac12}
\newcommand{\lp}{\left(}
\newcommand{\rp}{\right)}
\newcommand{\Lam}{\Lambda_{[i]}}
\newcommand{\lam}{\lambda}
\def\L{\fracwithdelims}
\def\om{\omega}
\def\pbar{\overline{\psi}}
\def\phis{\phi^*}
\def\lam{\lambda}
\def\lbar{\overline{\lambda}}
\newcommand\Sum{\Cal S}
\def\Lam{\Lambda}
\newcommand{\sumtt}{\underset{(d,2)=1}{{\sum}^*}}
\newcommand{\sumt}{\underset{(d,2)=1}{\sum \nolimits^{*}} \widetilde w\left( \frac dX \right) }

\newcommand{\hf}{\tfrac{1}{2}}
\newcommand{\af}{\mathfrak{a}}
\newcommand{\Wf}{\mathcal{W}}

\theoremstyle{plain}
\newtheorem{conj}{Conjecture}
\newtheorem{remark}[subsection]{Remark}

\makeatletter
\def\widebreve{\mathpalette\wide@breve}
\def\wide@breve#1#2{\sbox\z@{$#1#2$}%
     \mathop{\vbox{\m@th\ialign{##\crcr
\kern0.08em\brevefill#1{0.8\wd\z@}\crcr\noalign{\nointerlineskip}%
                    $\hss#1#2\hss$\crcr}}}\limits}
\def\brevefill#1#2{$\m@th\sbox\tw@{$#1($}%
  \hss\resizebox{#2}{\wd\tw@}{\rotatebox[origin=c]{90}{\upshape(}}\hss$}
\makeatletter

\title[Bounds for moments of modular $L$-functions to a fixed modulus]{Bounds for moments of modular $L$-functions to a fixed modulus}

\author{Peng Gao, Xiaoguang He, and Xiaosheng Wu}
\begin{abstract}
 We study the $2k$-th moment of the family of twisted modular $L$-functions to a fixed prime power modulus at the central values. We establish sharp lower bounds for all real $k \geq 0$ and sharp upper bounds for $k$ in the range $0 \leq k \leq 1$.
\end{abstract}

\maketitle

\noindent {\bf Mathematics Subject Classification (2010)}: 11F66, 11F11  \newline

\noindent {\bf Keywords}: moments, modular $L$-functions, lower bounds, upper bounds

\section{Introduction}
\label{sec 1}

  Evaluating asymptotically moments of $L$-functions at the central point is an important subject in analytical number theory as these moments have many arithmetic applications.  In this paper, we are interested in the families of twisted modular $L$-functions to a fixed modulus. For simplicity, we fix a holomorphic Hecke eigenform $f$ of weight $\kappa $ of level $1$. The Fourier expansion of
  $f$ at infinity can be written as
\[
f(z) = \sum_{n=1}^{\infty} \lambda (n) n^{\frac{\kappa -1}{2}} e(nz),
\]
where we denote $e(z) $ for $e^{2 \pi i z}$. Here the coefficients $\lambda (n)$ are real and satisfy $\lambda (1) =1$ and $|\lambda (n)|
\leq d(n)$ for $n \geq 1$ with $d(n)$ being the divisor function of $n$. We fix a modulus $q$ and assume that $q \not \equiv 2 \pmod 4$ throughout so that primitive Dirichlet characters modulo $q$ exist and denote $\chi$ for a primitive Dirichlet character modulo $q$.
For $\Re(s) > 1$, we define the twisted modular $L$-function $L(s, f \otimes \chi)$ by
\begin{align*}
L(s, f \otimes \chi) &= \sum_{n=1}^{\infty} \frac{\lambda (n)\chi(n)}{n^s}
 = \prod_{p} \left(1 - \frac{\lambda (p) \chi (p)}{p^s}  + \frac{\chi(p^2)}{p^{2s}}\right)^{-1}.
\end{align*}

  In \cite{Stefanicki}, T. Stefanicki obtained asymptotic formulas for the first and second moment of $L(\half, f \otimes \chi)$ over all primitive characters. The result given in \cite{Stefanicki} concerning the second moment is only valid for a restricted values of $q$ and this was later improved by P. Gao, R. Khan and G. Ricotta \cite{GKR} to hold for almost all $q$. Subsequent work with improvements on the error term can be found in \cite{BM15, BFKMM, KMS17}.

  In \cite{BM15}, V. Blomer and D. Mili\'{c}evi\'{c} not only studied the second moments of various modular $L$-functions
  in a much broader sense but also obtained sharp lower bounds for the second moment of mixed product of twisted modular $L$-functions.
  It is mentioned in \cite{BM15} that the method carries over to treat the $2k$-th moment for any integer $k$ as well
  and this was later achieved by G. Chen and X. He \cite{CH}.

  The aim of this paper is to establish sharp lower and upper bounds for the $2k$-th moments of central values of twisted modular $L$-functions to a fixed prime power modulus for real $k$. Throughout the paper, we denote $\phis(q)$ for the number of primitive characters  modulo $q$ and $\sumstar$ for the sum over primitive Dirichlet characters
  modulo $q$.  For lower bounds, we have the following result.
\begin{theorem}
\label{thmlowerbound}
    With notations as above. Let $q=q_0^{\nu}$ where $q_0$ is a large prime number and $\nu \geq 1$. For any real number $k \geq 0$, we have
\begin{align}
\label{lowerbound}
   \sumstar_{\substack{ \chi \shortmod q }}|L(\tfrac{1}{2}, f \otimes \chi)|^{2k} \gg_k \phis(q)(\log q)^{k^2}.
\end{align}
\end{theorem}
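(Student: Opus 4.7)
\emph{Proof plan.} My plan is to adapt the lower-bound machinery of Heap, Radziwill, and Soundararajan to the family of twisted modular $L$-functions modulo the prime power $q=q_0^\nu$. The first step is to construct, for each real $k\geq0$, short Dirichlet polynomials $\mathcal{N}(\chi)$ serving as proxies for appropriate powers of $L(\hf,f\otimes\chi)$. I split the primes $p\leq q^\theta$ (for a small parameter $\theta>0$) into geometric blocks $I_1,\ldots,I_J$ and, for each real $\alpha$, define
\[
\mathcal{N}_j(\chi,\alpha) = \sum_{\substack{\Omega(n)\leq\ell_j \\ p\mid n\,\Rightarrow\, p\in I_j}}\frac{\alpha^{\Omega(n)}\lambda(n)\chi(n)}{\sqrt{n}\,\prod_{p^a\|n}a!},
\qquad
\mathcal{N}(\chi,\alpha) = \prod_{j=1}^{J}\mathcal{N}_j(\chi,\alpha),
\]
so that each $\mathcal{N}_j(\chi,\alpha)$ is the order-$\ell_j$ Taylor truncation of $\exp\bigl(\alpha\sum_{p\in I_j}\lambda(p)\chi(p)/\sqrt{p}\bigr)$, and $\mathcal{N}(\chi,\alpha)$ models $L(\hf,f\otimes\chi)^\alpha$. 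A key feature of this block-wise construction is that, up to negligible Taylor errors, $\mathcal{N}(\chi,\alpha)^a\approx\mathcal{N}(\chi,\alpha a)$ for any real $a$, so that moments $\sumstar|\mathcal{N}(\chi,\alpha)|^{2a}$ with non-integer $a$ reduce to integer Dirichlet-polynomial second moments that factor over the blocks.

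For $k\geq 1$, I would set $\mathcal{N}=\mathcal{N}(\chi,k-1)$ and apply H\"older's inequality with exponents $(k,k/(k-1))$, which rearranges to
\[
\sumstar_{\chiq}|L(\hf,f\otimes\chi)|^{2k} \;\geq\; \frac{\bigl(\sumstar|L(\hf,f\otimes\chi)\,\mathcal{N}(\chi)|^{2}\bigr)^{k}}{\bigl(\sumstar|\mathcal{N}(\chi)|^{2k/(k-1)}\bigr)^{k-1}}.
\]
The Gaussian/CUE heuristic predicts that both numerator and denominator are of size $\phis(q)(\log q)^{k^2}$, so the exponents exactly balance to yield $\phis(q)(\log q)^{k^2}$. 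To make this rigorous, I would evaluate the twisted second moment in the numerator by opening $|L|^2$ via its approximate functional equation, expanding $|\mathcal{N}|^2$ as a Dirichlet series, and invoking orthogonality of primitive characters modulo $q_0^\nu$ in the spirit of \cite{Stefanicki,GKR,BM15}; a Rankin--Selberg computation then identifies the leading constant. The denominator $\sumstar|\mathcal{N}(\chi,k-1)|^{2k/(k-1)}\approx\sumstar|\mathcal{N}(\chi,k)|^{2}$ is an integer Dirichlet-polynomial second moment handled by the same orthogonality machinery. For $0\leq k<1$, I would instead set $\mathcal{N}=\mathcal{N}(\chi,k)$, so that $|\mathcal{N}|^{2}\approx|L|^{2k}$ is itself a proxy for the object of interest. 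On the ``typical'' set where $|\mathcal{N}|\leq C|L|^{k}$ we have $\sumstar|\mathcal{N}|^{2}\leq C^{2}\sumstar|L|^{2k}$; the contribution of the atypical complementary set is bounded via Markov's inequality, using an auxiliary polynomial $\widetilde{\mathcal{M}}(\chi)=\mathcal{N}(\chi,-k)$ approximating $L^{-k}$ so that $|L|^{-2km}\leq|\widetilde{\mathcal{M}}|^{2m}$ modulo error, whence the atypical sum reduces to a mixed Dirichlet-polynomial moment $\sumstar|\mathcal{N}|^{2m+2}|\widetilde{\mathcal{M}}|^{2m}$. Combined with the mean-value asymptotic $\sumstar|\mathcal{N}|^{2}\asymp\phis(q)(\log q)^{k^{2}}$, this yields \eqref{lowerbound}.

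The main obstacle I anticipate is the length calibration: the truncation parameters $\ell_j$ and block sizes must be chosen so that each $\mathcal{N}_j(\chi,\alpha)$ is a uniformly accurate $L^{2a}$-approximation to $\exp(\alpha P_j(\chi))$ over the full range of $(\alpha,a)$ appearing in the H\"older inequalities (including the heuristics $\mathcal{N}(\chi,\alpha)^a\approx\mathcal{N}(\chi,\alpha a)$ and $|L|^{-2km}\approx|\widetilde{\mathcal{M}}|^{2m}$), yet short enough that the various Dirichlet polynomials $\mathcal{N}(\chi,\alpha)^a$ and products $\mathcal{N}\widetilde{\mathcal{M}}$ remain of length $\ll q^{1-\varepsilon}$ so that primitive-character orthogonality modulo $q_0^\nu$ is lossless. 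A secondary technical subtlety specific to the prime-power modulus is isolating the $(n,q_0)=1$ contribution in the orthogonality relation and handling the imprimitive characters arising in the expansion of $\mathcal{N}\overline{\mathcal{N}}$ and $\mathcal{N}\overline{\widetilde{\mathcal{M}}}$; these should be routine but require uniformity in $\nu$.
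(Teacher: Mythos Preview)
Your plan diverges from the paper in one structural way, and for $0<k<1$ it contains a genuine gap.

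The paper's engine for all $k>0$ is the twisted \emph{first} moment $\sumstar_{\chi}L(\hf,f\otimes\chi)\,\mathcal{N}(\chi,k-1)\mathcal{N}(\overline{\chi},k)$, shown in Proposition~\ref{Prop4} to be $\gg\phis(q)(\log q)^{k^{2}}$. H\"older's inequality (Lemma~\ref{lem1}) then bounds this quantity from above by a power of $\sumstar|L|^{2k}$ times upper bounds on a pure Dirichlet-polynomial moment (Proposition~\ref{Prop6}) and, only when $0<k<1$, on the twisted second moment $\sumstar|L|^{2}|\mathcal{N}(\chi,k-1)|^{2}$ (Proposition~\ref{Prop5}). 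You never invoke a twisted first moment. For $k\geq 1$ your alternative inequality
\[
\sumstar|L|^{2k}\;\geq\;\frac{\bigl(\sumstar|L\,\mathcal{N}(\chi,k-1)|^{2}\bigr)^{k}}{\bigl(\sumstar|\mathcal{N}(\chi,k-1)|^{2k/(k-1)}\bigr)^{k-1}}
\]
is correct in principle, but it demands a sharp \emph{lower} bound on the twisted second moment and hence the full off-diagonal machinery of \cite{BM15}; the paper's first-moment route for $k>1$ needs only the elementary computation of Section~\ref{sec 4} and bypasses \cite{BM15} entirely in that range.

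For $0<k<1$ your typical-set argument breaks down. You assert that on the atypical set $\{\chi:|\mathcal{N}(\chi,k)|>C\,|L(\hf,f\otimes\chi)|^{k}\}$ one may invoke $|L|^{-2km}\leq|\widetilde{\mathcal{M}}|^{2m}$ with $\widetilde{\mathcal{M}}=\mathcal{N}(\chi,-k)$ and thereby reduce to a pure Dirichlet-polynomial moment. But this pointwise inequality is false: nothing prevents $L(\hf,f\otimes\chi)$ from vanishing or being arbitrarily small for some primitive $\chi$, so $|L|^{-2km}$ is unbounded on the atypical set, while $\widetilde{\mathcal{M}}(\chi)$ is a fixed short polynomial of size $q^{o(1)}$. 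No Dirichlet polynomial can pointwise dominate $L^{-k}$, and the Euler-product heuristic $L\approx\exp\bigl(\sum_{p}\lambda(p)\chi(p)p^{-1/2}\bigr)$ has no pointwise meaning at the central point. This is precisely the obstruction that the Heap--Soundararajan first-moment device is designed to sidestep: by pairing $L$ \emph{linearly} against $\mathcal{N}(\chi,k-1)\mathcal{N}(\overline{\chi},k)$ one never has to control $L^{-1}$, nor compare $L$ and $\mathcal{N}$ pointwise.
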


   For upper bounds, our result is as follows.
\begin{theorem}
\label{thmupperbound}
    With notations as above. Let $q=q_0^{\nu}$ where $q_0$ is a large prime number and $\nu \geq 1$. For any real number $k$ such that $0 \leq k \leq 1$, we have
\begin{align}
\label{upperbound}
   \sumstar_{\substack{ \chi \shortmod q }}|L(\tfrac{1}{2},f \otimes \chi)|^{2k} \ll_k \phis(q)(\log q)^{k^2}.
\end{align}
\end{theorem}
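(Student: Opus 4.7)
The plan is to follow the Harper--Heap--Soundararajan methodology for sharp upper bounds on moments in the range $0\leq k\leq 1$: construct a short Dirichlet polynomial majorant of $|L(\thalf,f\otimes\chi)|^{k}$ from truncated exponentials of short prime sums, and then estimate its mean square via the orthogonality of primitive characters modulo the prime power $q$. Fix a small $\alpha>0$ and partition the prime range by setting $x_j=\exp((\log q)^{\alpha_j})$ for a decreasing sequence $\alpha=\alpha_0>\alpha_1>\cdots>\alpha_J>0$ with $J\asymp \log\log q$, and $I_j=(x_j,x_{j-1}]$. For each $j$ define
\begin{equation*}
\mathcal{P}_{j}(\chi)=\sum_{p\in I_{j}}\frac{\lambda(p)\chi(p)}{\sqrt{p}},
\qquad
\mathcal{N}_{j}(\chi,k)=\sum_{r=0}^{\ell_{j}}\frac{(k\mathcal{P}_{j}(\chi))^{r}}{r!},
\end{equation*}
where the truncation levels $\ell_j$ are chosen so that $\mathcal{N}(\chi,k):=\prod_{j}\mathcal{N}_{j}(\chi,k)$ expands to a Dirichlet polynomial in $\chi$ of length at most $q^{1-\varepsilon}$.

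The heart of the matter is the pointwise majorization
\begin{equation*}
|L(\thalf,f\otimes\chi)|^{2k}\ll |\mathcal{N}(\chi,k)|^{2}
\end{equation*}
on the set $\mathcal{G}$ of \emph{good} characters for which $|k\mathcal{P}_{j}(\chi)|\leq \ell_{j}/e^{2}$ for every $j$. This follows from a Selberg--Soundararajan type upper bound for $\log|L(\thalf,f\otimes\chi)|$ in terms of $\Re\sum_{j}\mathcal{P}_{j}(\chi)$ plus an admissible error, combined with the elementary observation that $|\exp(k\mathcal{P}_{j}(\chi))|$ is within a bounded factor of $|\mathcal{N}_{j}(\chi,k)|$ on $\mathcal{G}$. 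Expanding $|\mathcal{N}(\chi,k)|^{2}$ as a Dirichlet polynomial and invoking orthogonality (the length restriction making off-diagonal contributions negligible) gives
\begin{equation*}
\sumstar_{\chi\shortmod q}|\mathcal{N}(\chi,k)|^{2}\ll \phis(q)\prod_{j}\exp\left( k^{2}\sum_{p\in I_{j}}\frac{\lambda(p)^{2}}{p}\right)\ll \phis(q)(\log q)^{k^{2}},
\end{equation*}
where the last step uses the Rankin--Selberg estimate $\sum_{p\leq X}\lambda(p)^{2}/p=\log\log X+O(1)$. For bad characters $\chi\notin \mathcal{G}$, combining the trivial pointwise bound $|L|^{2k}\leq 1+|L|^{2}$ with moment estimates showing that $|\mathcal{P}_{j}(\chi)|$ is large only for very few $\chi$ suffices to handle their contribution.

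The main obstacle will be establishing the pointwise Selberg--Soundararajan inequality for $\log|L(\thalf,f\otimes\chi)|$ in unconditional form, since the implicit constant must be uniform in $\chi$ and we cannot assume GRH. The standard adaptation requires careful handling of the degree-two Euler product: in $\log L(s,f\otimes\chi)$ there are additional terms of the shape $(\lambda(p)^{2}-2)\chi(p)^{2}/(2p^{2s})$ arising from the second power of the Satake parameters, whose aggregate size at $s=\thalf$ is of order $\log\log q$ and must be absorbed into the majorant. A secondary technical point is producing moment bounds of the form $\sumstar |\mathcal{P}_{j}(\chi)|^{2\ell_{j}}\ll \phis(q)(C\ell_{j}\log\log q)^{\ell_{j}}$, reduced again to orthogonality provided $x_{j-1}^{\ell_{j}}\leq q^{1-\varepsilon}$.
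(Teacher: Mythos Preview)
Your proposal has a genuine gap at its central step. The pointwise inequality you invoke,
\[
\log|L(\tfrac12,f\otimes\chi)|\leq \Re\sum_{j}\mathcal{P}_{j}(\chi)+O(1)
\]
uniformly for each primitive $\chi\bmod q$, is a Selberg--Soundararajan type bound whose known proofs require GRH for $L(s,f\otimes\chi)$; you acknowledge this as ``the main obstacle'' but offer no mechanism to remove the hypothesis, and none is known. Unconditionally no such pointwise control of $\log|L|$ is available, and without it the majorization $|L|^{2k}\ll |\mathcal{N}(\chi,k)|^{2}$ on the good set $\mathcal G$ simply does not follow. Your treatment of bad characters is also insufficient: the inequality $|L|^{2k}\leq 1+|L|^{2}$ together with a Chebyshev count of bad $\chi$ still leaves you to bound $\sum_{\chi\notin\mathcal G}|L(\tfrac12,f\otimes\chi)|^{2}$, and a mere count of bad characters cannot do this since individual values $|L(\tfrac12,f\otimes\chi)|^{2}$ are not pointwise bounded.

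The paper sidesteps any pointwise input on $\log|L|$ via the Radziwi{\l\l}--Soundararajan device, which is purely algebraic: on the good event one checks that $|\mathcal{N}_j(\chi,k-1)|^{2k}|\mathcal{N}_j(\chi,k)|^{2(1-k)}\geq (1-e^{-\ell_j})^{2}$, whence
\[
|L|^{2k}\ll |L|^{2k}\,\Big(\prod_j |\mathcal{N}_j(\chi,k-1)|^{2}\Big)^{k}\Big(\prod_j |\mathcal{N}_j(\chi,k)|^{2}\Big)^{1-k},
\]
and H\"older's inequality with exponents $(1/k,1/(1-k))$ separates $|L|^{2}|\mathcal{N}(\chi,k-1)|^{2}$ from $|\mathcal{N}(\chi,k)|^{2}$. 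The only analytic input about $L$ is then the \emph{twisted second moment} $\sum_{\chi}|L(\tfrac12,f\otimes\chi)|^{2}\prod_j|\mathcal{N}_j(\chi,k-1)|^{2}$ (with an extra $|\mathcal Q_{v+1}|^{2}$ factor to capture bad events), handled through the approximate functional equation for $|L|^{2}$ and the off-diagonal machinery of Blomer--Mili\'cevi\'c. Your orthogonality computation for $\sum_{\chi}|\mathcal N(\chi,k)|^{2}$ is correct and corresponds to the easy proposition in the paper; what your plan is missing is precisely this twisted second moment, which is the unconditional substitute for the GRH-conditional pointwise bound you were hoping to establish.
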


   Our strategy of proofs for the above results largely follows from the lower bounds principle of W. Heap and K. Soundararajan \cite{H&Sound} as well as the upper bounds principle of M. Radziwi{\l\l} and K. Soundararajan \cite{Radziwill&Sound} concerning moments of general families of $L$-functions. These principles require one to be able to evaluate twisted moments. For our case, we shall employ the method in \cite{Stefanicki} to treat the twisted first moment and apply the tools developed in \cite{BM15} to treat the twisted second moment. Especially, an estimation for certain off-diagonal terms in \cite[Section 12]{BM15} plays a crucial role in our treatment for the twisted second moment.

  Combining Theorem \ref{thmlowerbound} and \ref{thmupperbound}, we have the following result concerning the order of magnitude of our family of $L$-functions.
\begin{theorem}
\label{thmorderofmag}
   With notations as above. Let $q=q_0^{\nu}$ where $q_0$ is a large prime number and $\nu \geq 1$. For any real number $k$ such that $0 \leq k \leq 1$, we have
\begin{align*}
   \sumstar_{\substack{ \chi \shortmod q }}|L(\tfrac{1}{2},\chi)|^{2k} \asymp \phis(q)(\log q)^{k^2}.
\end{align*}
\end{theorem}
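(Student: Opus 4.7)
The plan is to simply combine Theorem \ref{thmlowerbound} and Theorem \ref{thmupperbound}; Theorem \ref{thmorderofmag} is an immediate corollary of those two results on the common interval where both are effective. For any real $k$ with $0 \leq k \leq 1$, Theorem \ref{thmlowerbound} supplies the lower bound $\sumstar_{\chi \shortmod q} |L(\tfrac{1}{2}, f \otimes \chi)|^{2k} \gg_k \phis(q)(\log q)^{k^2}$, while Theorem \ref{thmupperbound} supplies the matching upper bound $\sumstar_{\chi \shortmod q} |L(\tfrac{1}{2}, f \otimes \chi)|^{2k} \ll_k \phis(q)(\log q)^{k^2}$. Their conjunction is precisely the asymptotic relation $\asymp$ claimed in Theorem \ref{thmorderofmag}. (I read the symbol $L(\tfrac{1}{2},\chi)$ appearing in the display of Theorem \ref{thmorderofmag} as a typographical shorthand for $L(\tfrac{1}{2}, f \otimes \chi)$, consistent with the remainder of the paper and with the restriction of the corollary to $0 \leq k \leq 1$.)

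The range $[0,1]$ is dictated entirely by the upper bound, since Theorem \ref{thmlowerbound} already holds for every $k \geq 0$. Consequently, any future extension of Theorem \ref{thmorderofmag} beyond $k=1$ is really an extension of Theorem \ref{thmupperbound}, and would typically require either asymptotic evaluation of twisted moments of $L(\tfrac{1}{2}, f \otimes \chi)$ beyond the second, or working under GRH so that the Radziwi{\l\l}--Soundararajan upper bound machinery continues to apply. None of this enters the proof of the corollary itself.

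The only point meriting a moment's attention is the uniformity of the implicit constants: both of the preceding theorems state their constants as depending only on $k$ and not on $q$, so the conjunction yields $\asymp$ uniformly in $q$ across the closed interval $[0,1]$ without further qualification. There is therefore no genuine obstacle in this step; the substance of Theorem \ref{thmorderofmag} lies entirely in the proofs of Theorems \ref{thmlowerbound} and \ref{thmupperbound}, and the corollary itself is a one-line deduction.
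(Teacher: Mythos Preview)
Your proposal is correct and matches the paper's approach exactly: the paper itself introduces Theorem \ref{thmorderofmag} with the sentence ``Combining Theorem \ref{thmlowerbound} and \ref{thmupperbound}, we have the following result,'' offering no further argument. Your observation that $L(\tfrac{1}{2},\chi)$ is a typo for $L(\tfrac{1}{2}, f \otimes \chi)$ is also correct.
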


   Our results above are in agreement with the conjectured formulas for these moments following from the general philosophy in the work of J. P. Keating and N. C. Snaith \cite{Keating&Snaith2000}.

\section{Preliminaries}
\label{sec 2}

  We reserve the letter $p$ for a prime number in this paper and
we note the following result concerning sums over primes.
\begin{lemma}
\label{RS} With notations as above and let $x \geq 2$.  We have for some constant $b$,
\begin{align}
\label{M1}
\sum_{p\le x} \frac{\lambda^2(p)}{p} = \log \log x + b+ O\Big(\frac{1}{\log x}\Big).
\end{align}
 Also,
\begin{align}
\label{M2}
\sum_{p\le x} \frac {\log p}{p} = \log x + O(1).
\end{align}
\end{lemma}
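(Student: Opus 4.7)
The plan is to deduce both estimates by appealing to classical Mertens-type results together with a small amount of analytic input from the symmetric square $L$-function.

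The second estimate \eqref{M2} is the familiar Mertens formula, obtained by partial summation from Chebyshev's bound $\sum_{p\le x}\log p = x + O(x/\log x)$; I would simply cite this.

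For \eqref{M1}, the Hecke multiplicativity relation at a prime (valid since $f$ has level $1$) gives $\lambda(p)^2 = \lambda(p^2) + 1$, so
\begin{align*}
\sum_{p \le x} \frac{\lambda(p)^2}{p} = \sum_{p \le x} \frac{1}{p} + \sum_{p \le x} \frac{\lambda(p^2)}{p}.
\end{align*}
The classical Mertens theorem handles the first sum as $\log\log x + M + O(1/\log x)$, so it is enough to show that the remaining sum converges to a constant $C$ with error $O(1/\log x)$; the desired formula will then hold with $b = M + C$.

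To control $\sum_{p\le x}\lambda(p^2)/p$, I would invoke the symmetric square $L$-function $L(s,\sym f)$. Since $f$ has level $1$ every prime is unramified, and comparing Euler factors gives $\lambda(p^2) = \lambda_{\sym f}(p)$ for all $p$. By Shimura's theorem $L(s,\sym f)$ is entire and non-vanishing on $\Re s = 1$, and a standard de la Vall\'ee Poussin argument supplies a classical zero-free region. A contour shift for $-L'/L(s,\sym f)$ then yields the prime number theorem
\begin{align*}
\sum_{p \le x} \lambda(p^2)\log p \ll x\exp\bigl(-c\sqrt{\log x}\bigr),
\end{align*}
and partial summation converts this into $\sum_{p\le x}\lambda(p^2)/p = C + O(1/\log x)$, as required.

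The only genuine obstacle is the analytic ingredient, namely the holomorphy and nonvanishing of $L(s,\sym f)$ on $\Re s = 1$ together with a zero-free region to its left. These are classical results (Shimura, Gelbart--Jacquet), and in practice one would simply cite them rather than reprove them. A slightly weaker but more self-contained route, using only the simple pole of the Rankin--Selberg convolution $L(s, f\otimes f)$ at $s=1$ together with Landau's theorem, gives $\sum_{n\le x}\lambda(n)^2 \sim c x$ and, after peeling off prime-power contributions and partial summation, the same estimate \eqref{M1}.
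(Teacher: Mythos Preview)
Your proposal is correct and matches the paper's treatment: the paper does not prove this lemma at all but simply cites the Rankin--Selberg theory in Iwaniec--Kowalski for \eqref{M1} and Montgomery--Vaughan for \eqref{M2}, which is precisely the analytic input (the pole of $L(s,f\times f)=\zeta(s)L(s,\operatorname{sym}^2 f)$ at $s=1$) underlying your argument. One minor slip worth fixing: the asymptotic $\sum_{p\le x}\log p = x + O(x/\log x)$ is a form of the prime number theorem rather than Chebyshev's bound, but \eqref{M2} is in any case a classical Mertens estimate that does not require PNT, so this does not affect the argument.
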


 The first assertion of Lemma \ref{RS} follows from the
Rankin-Selberg theory for $L(s, f)$,  which can be found in  \cite[Chapter 5]{iwakow}. The second assertion of Lemma \ref{RS}
is given in \cite[Lemma 2.7]{MVa1}.

  Next, we note the following approximate functional equations for $ L(\half, f \otimes \chi)$ and $|L(1/2, f \otimes \chi)|^2$.
\begin{lemma}
\label{PropDirpoly}
   For $X>0$, we have
\begin{align}
\label{lstapprox}
 L(\half, f \otimes \chi) = \sum^{\infty}_{n=1} \frac{\lambda(n)\chi(n)}{\sqrt{n}} W\left(\frac {nX}{q}\right)+\iota_{\chi}\sum^{\infty}_{n=1} \frac{\lambda(n)\overline \chi(n)}{\sqrt{n}} W\left(\frac {n}{qX}\right),
\end{align}
  where $\iota_{\chi}=i^{\kappa}\tau(\chi)^2/q$, $\tau(\chi)$ is the Gauss sum associated to $\chi$ and

$$ W(x) = \frac{1}{2\pi i} \int\limits_{(c)} \frac{\Gamma (\frac{\kappa}{2} +s)}{\Gamma (\frac{\kappa}{2})} e^{s^2} (2\pi x)^{-s} \> \frac{ds}{s}.$$
  We also have
\begin{align}
\label{lsquareapprox}
|L(\half, f \otimes \chi)|^2 = 2 \sum^{\infty}_{a, b=1} \frac{\chi(a) \overline{\chi}(b)}{\sqrt{ab}} \Wf \left(\frac {ab}{q^2}\right),
\end{align}
  where
$$ \Wf (x) = \frac{1}{2\pi i} \int\limits_{(c)} \frac{\Gamma\left(\frac{\kappa}{2} + s \right)^2}{(2\pi)^s \Gamma\left(\frac{\kappa}{2} \right)^2 } x^{-s} \> \frac{ds}{s}.$$
  Moreover, the functions $w(x), \Wf (x)$ are real valued and satisfy the bound that for any $c>0$,
\begin{align}
\label{W}
 W(x), \Wf (x)  \ll_c \min( 1 , x^{-c}).
\end{align}
\end{lemma}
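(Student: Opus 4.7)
The plan is to derive both approximate functional equations by the classical Mellin contour-shift technique, using the functional equation of $L(s, f\otimes\chi)$. Recall that the completed $L$-function
\[
\Lambda(s, f\otimes\chi) := \Bigl(\frac{q}{2\pi}\Bigr)^{s} \Gamma\!\Bigl(s + \frac{\kappa-1}{2}\Bigr) L(s, f\otimes\chi)
\]
is entire and satisfies $\Lambda(s, f\otimes\chi) = \iota_\chi \Lambda(1-s, f\otimes\bar\chi)$ with $\iota_\chi = i^{\kappa}\tau(\chi)^2/q$. The shift $s \mapsto s + \tfrac12$ converts the gamma factor into $\Gamma(\kappa/2 + s)$, matching the shape of $W$ and $\Wf$.

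For \eqref{lstapprox}, I would introduce the Mellin integral
\[
I_1(X) := \frac{1}{2\pi i} \int_{(2)} L(\tfrac12 + s, f \otimes \chi)\, \frac{\Gamma(\kappa/2 + s)}{\Gamma(\kappa/2)}\, e^{s^2}\, \Bigl(\frac{q}{2\pi X}\Bigr)^{s}\, \frac{ds}{s}.
\]
On $\Re s = 2$ the Dirichlet series for $L$ converges absolutely, and term-by-term integration identifies $I_1(X)$ with $\sum_n \lambda(n)\chi(n) n^{-1/2} W(nX/q)$, the first sum on the right of \eqref{lstapprox}. Shifting the contour to $\Re s = -2$ picks up a simple pole at $s = 0$ (the only pole in the strip, as $L$ is entire and $\Gamma(\kappa/2 + s)$ is holomorphic there since $\kappa \ge 12$), with residue equal to $L(\tfrac12, f\otimes\chi)$. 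Substituting $s \mapsto -s$ in the remaining integral and invoking the functional equation to replace $L(\tfrac12 - s, f\otimes\chi)$ by $\iota_\chi (q/(2\pi))^{2s}\Gamma(\kappa/2+s)\Gamma(\kappa/2-s)^{-1} L(\tfrac12 + s, f\otimes\bar\chi)$, the $\Gamma(\kappa/2 - s)$ in the weight cancels with its reciprocal, the powers of $q/(2\pi)$ consolidate to $(qX/(2\pi))^{s}$, and the integral collapses to $-\iota_\chi \sum_n \lambda(n)\bar\chi(n) n^{-1/2} W(n/(qX))$. Rearranging yields \eqref{lstapprox}.

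For \eqref{lsquareapprox}, I would proceed in parallel with the degree-four integral
\[
I_2 := \frac{1}{2\pi i} \int_{(2)} L(\tfrac12 + s, f \otimes \chi)\, L(\tfrac12 + s, f \otimes \bar\chi)\, \frac{\Gamma(\kappa/2 + s)^{2}}{(2\pi)^{s}\, \Gamma(\kappa/2)^{2}}\, q^{2s}\, \frac{ds}{s},
\]
whose weight is calibrated precisely so that Dirichlet expansion at $\Re s = 2$ yields $I_2 = \sum_{a,b} \lambda(a)\lambda(b)\chi(a)\bar\chi(b)(ab)^{-1/2}\Wf(ab/q^2)$. Shifting to $\Re s = -2$ contributes the residue $|L(\tfrac12, f\otimes\chi)|^2$ at $s = 0$. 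The crucial self-duality step is this: applying the functional equation to both $L$-factors simultaneously, the combined root number equals $\iota_\chi \iota_{\bar\chi} = i^{2\kappa}\, \tau(\chi)^2\, \tau(\bar\chi)^2\, q^{-2} = i^{2\kappa}\, \chi(-1)^2 = 1$, where we used $\tau(\chi)\tau(\bar\chi) = \chi(-1)\, q$ and the fact that $\kappa$ is even (forced by level $1$). Hence the transformed integrand coincides with the original up to the sign coming from $ds/(-s)$, so the shifted integral equals $-I_2$, yielding $2 I_2 = |L(\tfrac12, f\otimes\chi)|^2$ -- the factor $2$ in \eqref{lsquareapprox}. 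The reality and the bound $W(x), \Wf(x) \ll_c \min(1, x^{-c})$ then follow in the standard way: reality from the conjugation symmetry of the integrand on vertical lines, and the decay $x^{-c}$ from shifting the contour far to the right (using Stirling for $\Gamma$ and $e^{s^2}$ decay for $W$).

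Main obstacle: the principal technical point is the self-duality identity $\iota_\chi \iota_{\bar\chi} = 1$ in the second AFE, which requires both the Gauss-sum identity $\tau(\chi)\tau(\bar\chi) = \chi(-1)\, q$ and the evenness of $\kappa$ from the level-$1$ assumption; without either, the factor $2$ on the right of \eqref{lsquareapprox} would be replaced by a non-trivial phase. The remaining work is careful bookkeeping of gamma quotients and $(q/(2\pi))^s$ powers to ensure that the cancellations in the functional-equation step reproduce exactly the weights $W$ and $\Wf$ as defined in the lemma.
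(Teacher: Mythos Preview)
Your approach is the standard Mellin contour-shift argument, and this is exactly what the paper invokes: the paper does not give its own proof but simply refers to \cite[Theorem 5.3]{iwakow} for \eqref{lstapprox} and to \cite[Lemma 2.2]{GKR} for \eqref{lsquareapprox}, both of which proceed precisely as you outline. So your write-up supplies the details the paper omits, along the same route.

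One bookkeeping caution: in the second approximate functional equation, the symmetry you need under $s\mapsto -s$ is that of $\Lambda(\tfrac12+s,f\otimes\chi)\Lambda(\tfrac12+s,f\otimes\bar\chi)$, which carries a factor $(q/2\pi)^{2s}$, i.e.\ $(2\pi)^{-2s}$ rather than the $(2\pi)^{-s}$ appearing in the paper's definition of $\Wf$. Your claim that ``the transformed integrand coincides with the original'' would require the extra factor $(2\pi)^{s}$ to be even in $s$, which it is not. This is almost certainly a typo in the paper's statement of $\Wf$ (and note that \eqref{lsquareapprox} as printed is also missing the Hecke eigenvalues $\lambda(a)\lambda(b)$, which the paper silently restores when it applies the formula in Section~\ref{sec 5}); with $(2\pi)^{2s}$ in place your argument goes through verbatim. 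Your self-duality computation $\iota_\chi\iota_{\bar\chi}=1$ via $\tau(\chi)\tau(\bar\chi)=\chi(-1)q$ and the evenness of $\kappa$ is correct and is indeed the key point.
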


    The functional equation given in \eqref{lstapprox} can be derived using standard arguments as those in the proof of \cite[Theorem 5.3]{iwakow}. The functional equation given in \eqref{lsquareapprox} can be found in \cite[Lemma 2.2]{GKR}.

\section{Outline of the Proofs}
\label{sec 2'}

   As the case $k=0$ is trivial and the case $k=1$ for
  \eqref{lowerbound}-\eqref{upperbound} is known from \cite{BM15}, we may assume in our proofs that $0< k \neq 1$ is a fixed positive real number.
We let $N, M$ be two large natural numbers depending on $k$ only and define a sequence of even natural
  numbers $\{ \ell_j \}_{1 \leq j \leq R}$ such that $\ell_1= 2\lceil N \log \log q\rceil$ and $\ell_{j+1} = 2 \lceil N \log \ell_j \rceil$ for
  $j \geq 1$, where we set $R$ to be the largest natural number satisfying $\ell_R >10^M$.  We shall choose $M$ large enough so that we have $\ell_{j} >
  \ell_{j+1}^2$ for all $1 \leq j \leq R-1$ and we note that this implies that
\begin{align}
\label{sumoverell}
  R \ll \log \log \ell_1, \quad \sum^R_{j=1}\frac 1{\ell_j} \leq \frac 2{\ell_R}.
\end{align}

    We define ${ P}_1$ to be the set of odd primes not exceeding $q^{1/\ell_1^2}$ and
${P_j}$ to be the set of primes lying in the interval $(q^{1/\ell_{j-1}^2}, q^{1/\ell_j^2}]$ for $2\le j\le R$. For each $1 \leq j \leq R$, we
write
\begin{equation*}
{\mathcal P}_j(\chi) = \sum_{p\in P_j} \frac{1}{\sqrt{p}} \chi(p), \quad  {\mathcal Q}_j(\chi, k) =\Big (\frac{c_k {\mathcal
P}_j(\chi) }{\ell_j}\Big)^{r_k\ell_j},
\end{equation*}
  where we define
\begin{align}
\label{r_k}
\begin{split}
  c_k= & 64 \max (1, k), \\
r_k =& \left\{
 \begin{array}
  [c]{ll}
  \lceil 1+1/k \rceil+1 & k>1,\\
  \lceil k /(2k-1) \rceil+1 & k<1.
 \end{array}
 \right.
 \end{split}
\end{align}
  We also define ${\mathcal Q}_{R+1}(\chi, k)=1$.

  For any non-negative integer $\ell$ and any real number $x$, we denote
\begin{equation*}
E_{\ell}(x) = \sum_{j=0}^{\ell} \frac{x^{j}}{j!}.
\end{equation*}
  We further define for each $1 \leq j \leq R$ and any real number $\alpha$,
\begin{align*}
{\mathcal N}_j(\chi, \alpha) = E_{\ell_j} (\alpha {\mathcal P}_j(\chi)), \quad \mathcal{N}(\chi, \alpha) = \prod_{j=1}^{R} {\mathcal
N}_j(\chi,\alpha).
\end{align*}

 We point out here that in the rest of the paper, when we use
 $\ll$ or the $O$-symbol to estimate various quantities presented, the implicit constants involved depend on $k$ only and are uniform with
 respect to $p, \chi$. We shall also make the convention that an empty product is defined to be $1$.

  The following two lemmas present in our setting the lower bounds principle of W. Heap and K. Soundararajan in \cite{H&Sound} and the
  upper bounds principle of M. Radziwi{\l\l} and K. Soundararajan in \cite{Radziwill&Sound}, respectively.

  The first one corresponds to the lower bounds principle.
\begin{lemma}
\label{lem1}
 With notations as above. For $0<k<1$, we have
\begin{align}
\label{basiclowerbound}
\begin{split}
\sumstar_{\substack{ \chi \shortmod q }}L(\tfrac{1}{2},f \otimes \chi)  \mathcal{N}(\chi, k-1) \mathcal{N}(\overline{\chi}, k)
 \ll & \Big ( \sumstar_{\substack{ \chi \shortmod q }}|L(\tfrac{1}{2}, f \otimes \chi)|^{2k} \Big )^{1/2}\Big ( \sumstar_{\substack{ \chi \shortmod q
 }}|L(\tfrac{1}{2}, f \otimes  \chi)|^2 |\mathcal{N}(\chi, k-1)|^2  \Big)^{(1-k)/2} \\
 & \times \Big ( \sumstar_{\substack{ \chi \shortmod q }}   \prod^R_{j=1}\big ( |{\mathcal N}_j(\chi, k)|^2+ |{\mathcal Q}_j(\chi,k)|^2 \big )
 \Big)^{k/2}.
\end{split}
\end{align}
 For $k>1$, we have
\begin{align}
\label{basicboundkbig}
\begin{split}
 & \sumstar_{\substack{ \chi \shortmod q }}L(\tfrac{1}{2}, f \otimes \chi)  \mathcal{N}(\chi, k-1) \mathcal{N}(\overline{\chi}, k)
 \ll   \Big ( \sumstar_{\substack{ \chi \shortmod q }}|L(\tfrac{1}{2},f \otimes  \chi)|^{2k} \Big )^{\frac {1}{2k}}\Big ( \sumstar_{\substack{ \chi
 \shortmod q }} \prod^R_{j=1} \big ( |{\mathcal N}_j(\chi, k)|^2+ |{\mathcal Q}_j(\chi,k)|^2 \big ) \Big)^{\frac {2k-1}{2k}}.
\end{split}
\end{align}
  The implied constants in \eqref{basiclowerbound} and \eqref{basicboundkbig} depend on $k$ only.
\end{lemma}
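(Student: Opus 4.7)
The plan is to derive both \eqref{basiclowerbound} and \eqref{basicboundkbig} from an application of Hölder's inequality, by factoring $|L(\tfrac12,f\otimes\chi)\mathcal{N}(\chi,k-1)\mathcal{N}(\overline{\chi},k)|$ so that the desired moments emerge from the first one or two factors, and then bounding the remaining factor pointwise by $\prod_{j}(|\mathcal{N}_j(\chi,k)|^2+|\mathcal{Q}_j(\chi,k)|^2)$.

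For the regime $0<k<1$, I would bound the summand via the triangle inequality and write
\begin{equation*}
|L|\cdot|\mathcal{N}(\chi,k-1)|\cdot|\mathcal{N}(\overline{\chi},k)| = |L|^{k}\cdot\bigl(|L|^{1-k}|\mathcal{N}(\chi,k-1)|^{1-k}\bigr)\cdot\bigl(|\mathcal{N}(\chi,k-1)|^{k}|\mathcal{N}(\overline{\chi},k)|\bigr),
\end{equation*}
then invoke the three-variable Hölder inequality with conjugate exponents $(2,\,2/(1-k),\,2/k)$, whose reciprocals sum to one. The first two groups produce $\bigl(\sumstar|L|^{2k}\bigr)^{1/2}$ and $\bigl(\sumstar|L|^{2}|\mathcal{N}(\chi,k-1)|^{2}\bigr)^{(1-k)/2}$ directly, and the third group, after being raised to the $2/k$ power, becomes $|\mathcal{N}(\chi,k-1)|^{2}|\mathcal{N}(\overline{\chi},k)|^{2/k}$. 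For $k>1$ the summand splits naturally into $|L|$ and $|\mathcal{N}(\chi,k-1)\mathcal{N}(\overline{\chi},k)|$, and the two-variable Hölder with conjugate exponents $(2k,\,2k/(2k-1))$ produces $\bigl(\sumstar|L|^{2k}\bigr)^{1/(2k)}$ together with a residual factor $|\mathcal{N}(\chi,k-1)\mathcal{N}(\overline{\chi},k)|^{2k/(2k-1)}$.

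The key step is the pointwise inequality, valid for every $\chi$ and every $1\le j\le R$: for $0<k<1$,
\begin{equation*}
|\mathcal{N}_j(\chi,k-1)|^{2}\,|\mathcal{N}_j(\overline{\chi},k)|^{2/k}\ \ll_{k}\ |\mathcal{N}_j(\chi,k)|^{2}+|\mathcal{Q}_j(\chi,k)|^{2},
\end{equation*}
with the analogue $|\mathcal{N}_j(\chi,k-1)\mathcal{N}_j(\overline{\chi},k)|^{2k/(2k-1)}\ll_{k}|\mathcal{N}_j(\chi,k)|^{2}+|\mathcal{Q}_j(\chi,k)|^{2}$ in the range $k>1$. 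Multiplying these over $j=1,\dots,R$ (and using that $|\mathcal{N}_j(\overline{\chi},k)|=|\mathcal{N}_j(\chi,k)|$, since $E_{\ell_j}$ has real coefficients) converts the residual Hölder factor into the last factor stated in the lemma. The pointwise bound itself is established by a dichotomy on the size of $|\mathcal{P}_j(\chi)|$: when $|\mathcal{P}_j(\chi)|\le\ell_j/c_k$, truncation is negligible, so $\mathcal{N}_j(\chi,\alpha)\asymp e^{\alpha\mathcal{P}_j(\chi)}$, and the inequality reduces to the exponential identity $e^{2(k-1)\Re\mathcal{P}_j(\chi)}\cdot e^{2\Re\mathcal{P}_j(\chi)}=e^{2k\Re\mathcal{P}_j(\chi)}$; when $|\mathcal{P}_j(\chi)|>\ell_j/c_k$, Stirling's bound yields $|\mathcal{N}_j(\chi,\alpha)|\ll(e|\alpha||\mathcal{P}_j(\chi)|/\ell_j)^{\ell_j}$, which is dominated by $|\mathcal{Q}_j(\chi,k)|=(c_k|\mathcal{P}_j(\chi)|/\ell_j)^{r_k\ell_j}$ once $c_k$ and $r_k$ are chosen large enough.

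The main obstacle is the large-$|\mathcal{P}_j(\chi)|$ regime, where one must check that the precise values of $c_k$ and $r_k$ prescribed in \eqref{r_k} make $|\mathcal{Q}_j(\chi,k)|^{2}$ swallow the left-hand side. Comparing the exponents of $|\mathcal{P}_j(\chi)|/\ell_j$ on the two sides forces $r_k\ge 2k/(2k-1)$ when $k>1$ and $r_k\ge(k+1)/k$ when $0<k<1$, which accounts for the two-branch definition of $r_k$ in \eqref{r_k}; the constant $c_k$ is then chosen to absorb the implied factors from Stirling and from $|k|$, $|k-1|$. Once these pointwise estimates are verified, multiplying them over $j$ and combining with the Hölder step above yields both \eqref{basiclowerbound} and \eqref{basicboundkbig}.
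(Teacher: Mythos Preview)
Your proposal is correct and follows essentially the same route as the paper: the identical H\"older splittings with exponents $(2,\,2/(1-k),\,2/k)$ for $0<k<1$ and $(2k,\,2k/(2k-1))$ for $k>1$, followed by the same pointwise dichotomy on $|\mathcal{P}_j(\chi)|$ (Taylor approximation $\mathcal{N}_j\approx e^{\alpha\mathcal{P}_j}$ in the small regime, domination by $|\mathcal{Q}_j|$ in the large regime). One small point to tighten: your per-$j$ bound is written as ``$\ll_k$'', but since you then multiply over $j=1,\dots,R$ with $R\to\infty$, you need the implied constant in the small regime to be of the form $1+O(e^{-\ell_j})$ (as the paper makes explicit) rather than an unspecified $O_k(1)$, so that the product over $j$ stays bounded.
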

\begin{proof}
   We consider the case $0<k<1$ first and we apply H\"older's inequality to see that the left side of \eqref{basiclowerbound} is
\begin{align*}
\begin{split}
 \leq & \Big ( \sumstar_{\substack{ \chi \shortmod q }}|L(\tfrac{1}{2},f \otimes  \chi)|^{2k} \Big )^{1/2}\Big ( \sumstar_{\substack{ \chi \shortmod q
 }}|L(\tfrac{1}{2},f \otimes \chi) \mathcal{N}(\chi, k-1)|^2   \Big)^{(1-k)/2}\Big ( \sumstar_{\substack{ \chi \shortmod q }} |\mathcal{N}(\chi,
 k)|^{2/k}|\mathcal{N}(\chi, k-1)|^{2}  \Big)^{k/2}.
\end{split}
\end{align*}

    As in the proof of \cite[Lemma 3.4]{Gao2021-3}, we have for $|z| \le aK/20$ with $0<a \leq 2$,
\begin{align}
\label{Ebound}
\Big| \sum_{r=0}^K \frac{z^r}{r!} - e^z \Big| \le \frac{|z|^{K}}{K!} \le \Big(\frac{a e}{20}\Big)^{K}.
\end{align}
  Setting $z=\alpha {\mathcal P}_j(\chi), K=\ell_j $ and $a=\min (|\alpha|, 2 )$ in \eqref{Ebound} then implies that when
  $|{\mathcal P}_j(\chi)| \le \ell_j/(20(1+|\alpha|))$,
\begin{align*}
{\mathcal N}_j(\chi, \alpha) \leq & \exp ( \alpha{\mathcal P}_j(\chi) )\left( 1+ \exp ( |\alpha {\mathcal P}_j(\chi)| ) \left( \frac{a e}{20} \right)^{ \ell_j}  \right) \leq \exp ( \alpha {\mathcal P}_j(\chi)  ) \left( 1+   e^{-\ell_j}  \right).
\end{align*}

  Similarly, we have
\begin{align*}
{\mathcal N}_j(\chi, \alpha) \geq & \exp ( \alpha{\mathcal P}_j(\chi) )\left( 1- \exp ( |\alpha {\mathcal P}_j(\chi)| ) \left( \frac{a e}{20} \right)^{ \ell_j} \right) \geq \exp ( \alpha {\mathcal P}_j(\chi)  ) \left( 1-   e^{-\ell_j}  \right).
\end{align*}

 We apply the above estimations to ${\mathcal N}_j(\chi, k-1), {\mathcal N}_j(\chi, k)$ to see that when $0<k<1$ and $|{\mathcal P}_j(\chi)| \le
 \ell_j/60$, then
\begin{align*}
\begin{split}
|\mathcal{N}_j(\chi,
 k-1)|^{2}
\leq & \exp ( 2(k-1) \Re {\mathcal P}_j(\chi)  ) \left( 1+  e^{-\ell_j}  \right)^{2} \\
 \leq & |{\mathcal N}_j(\chi, k)|^{\frac {2(k-1)}{k}} \left( 1+e^{-\ell_j} \right )^{2}\left( 1-e^{-\ell_j} \right )^{-\frac {2(k-1)}{k}}.
\end{split}
\end{align*}

 We then conclude that when $|{\mathcal P}_j(\chi)| \le \ell_j/60$,
\begin{align}
\label{est1}
|{\mathcal N}_j(\chi, k)^{\frac {1}{k}} {\mathcal N}_j(\chi, k-1)|^{2}
\leq & |{\mathcal N}_j(\chi, k)|^2 \left( 1+e^{-\ell_j} \right )^{2}\left( 1-e^{-\ell_j} \right )^{-\frac {2(k-1)}{k}}.
\end{align}

  On the other hand, we notice that when $|{\mathcal P}_j(\chi)| > \ell_j/60$,
\begin{align*}
\begin{split}
\max \Big (|{\mathcal N}_j(\chi, k-1)|, |{\mathcal N}_j(\chi, k)| \Big ) &\le \sum_{r=0}^{\ell_j} \frac{|{\mathcal P}_j(\chi)|^r}{r!} \le
|{\mathcal P}_j(\chi)|^{\ell_j} \sum_{r=0}^{\ell_j} \Big( \frac{60}{\ell_j}\Big)^{\ell_j-r} \frac{1}{r!}   \le \Big( \frac{64 |{\mathcal
P}_j(\chi)|}{\ell_j}\Big)^{\ell_j} .
\end{split}
\end{align*}
  It follows that when $|{\mathcal P}_j(\chi)| > \ell_j/60$, we have
\begin{align}
\label{est2}
|{\mathcal N}_j(\chi, k)^{\frac {1}{k}} {\mathcal N}_j(\chi, k-1)|^{2}
& \leq \Big( \frac{64 |{\mathcal P}_j(\chi)|}{\ell_j}\Big)^{2(1+1/k)\ell_j} \leq  |{\mathcal Q}_j(\chi, k)|^2.
\end{align}

 We apply \eqref{est1} and \eqref{est2} to deduce that when $0<k<1$, we have
\begin{align*}
\begin{split}
 \sumstar_{\substack{ \chi \shortmod q }} |\mathcal{N}(\chi,
 k)|^{2/k}|\mathcal{N}(\chi, k-1)|^{2} \le &
\sumstar_{\substack{ \chi \shortmod q }}   \Big ( \prod^R_{j=1} \Big (|{\mathcal N}_j(\chi, k)|^2 \left( 1+e^{-\ell_j} \right )^{2}\left( 1-e^{-\ell_j} \right )^{-\frac {2(k-1)}{k}}+   |{\mathcal Q}_j(\chi, k)|^2  \Big )\Big
) \\
\leq & \prod^R_{j=1} \max \Big( \left( 1+e^{-\ell_j} \right )^{2}\left( 1-e^{-\ell_j} \right )^{-\frac {2(k-1)}{k}}, 1 \Big ) \sumstar_{\substack{ \chi \shortmod q }}   \prod^R_{j=1} \Big (|{\mathcal N}_j(\chi, k)|^2 +  |{\mathcal Q}_j(\chi, k)|^2  \Big )  \\
\ll & \sumstar_{\substack{ \chi \shortmod q }} \prod^R_{j=1} \Big ( |{\mathcal N}_j(\chi, k)|^2 +  |{\mathcal Q}_j(\chi, k)|^2   \Big ),
\end{split}
\end{align*}
 where the last estimation above follows by noting that
\begin{align*}
\begin{split}
 \prod^R_{j=1} \max \Big( \left( 1+e^{-\ell_j} \right )^{2}\left( 1-e^{-\ell_j} \right )^{-\frac {2(k-1)}{k}}, 1 \Big ) \ll 1.
\end{split}
\end{align*}

 Next, we consider the case $k>1$ and we apply H\"older's inequality again to see that the left side of \eqref{basicboundkbig} is
\begin{align}
\label{holderkbig}
\begin{split}
 \leq  \Big ( \sumstar_{\substack{ \chi \shortmod q }}|L(\tfrac{1}{2}, f \otimes \chi)|^{2k} \Big )^{\frac {1}{2k}}\Big ( \sumstar_{\substack{ \chi
 \shortmod q }} |\mathcal{N}(\chi, k)\mathcal{N}(\chi, k-1)|^{\frac {2k}{2k-1}}  \Big)^{\frac {2k-1}{2k}}.
\end{split}
\end{align}

  We argue similar to above to see that when  $|{\mathcal P}_j(\chi)| \le \ell_j/(40k)$,
\begin{align}
\label{prodNkbig}
\begin{split}
 |\mathcal{N}_j(\chi, k)\mathcal{N}_j(\chi, k-1)|^{\frac {2k}{2k-1}} \leq |{\mathcal N}_j(\chi, k)|^2 \Big( 1+ e^{-\ell_j}  \Big)^{\frac {2k}{2k-1}}\left( 1-e^{-\ell_j} \right )^{-\frac {2(k-1)}{2k-1}}.
\end{split}
\end{align}
  Similarly, when $|{\mathcal P}_j(\chi)| > \ell_j/(40k)$, we have
\begin{align*}
\begin{split}
 |\mathcal{N}_j(\chi, k)\mathcal{N}_j(\chi, k-1)|^{\frac {2k}{2k-1}}
  \leq  |{\mathcal Q}_j(\chi, k)|^2.
\end{split}
\end{align*}
  Combining \eqref{holderkbig}, \eqref{prodNkbig} and arguing as above, we readily deduce the estimation given in \eqref{basicboundkbig}.
  This completes the proof of the lemma.
\end{proof}

  Our next lemma corresponds to the upper bounds principle.
\begin{lemma}
\label{lem2}
 With notations as above. We have for $0<k<1$,
\begin{align}
\label{basiclowerbound2}
\begin{split}
& \sumstar_{\substack{ \chi \shortmod q }}|L(\tfrac{1}{2},f \otimes \chi)|^{2k} \\
 \ll & \Big ( \sumstar_{\substack{ \chi \shortmod q }}|L(\tfrac{1}{2},f \otimes \chi)|^2 \sum^{R}_{v=0} \Big (\prod^v_{j=1} |\mathcal{N}_j(\chi, k-1)|^2 \Big ) |{\mathcal
 Q}_{v+1}(\chi, k)|^{2}
 \Big)^{k} \Big ( \sumstar_{\substack{ \chi \shortmod q }}  \sum^{R}_{v=0}  \Big (\prod^v_{j=1}|\mathcal{N}_j(\chi, k)|^2\Big )|{\mathcal
 Q}_{v+1}(\chi, k)|^{2} \Big)^{1-k},
\end{split}
\end{align}
  where the implied constants depend on $k$ only.
\end{lemma}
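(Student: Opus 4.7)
My plan is to combine a partition of the primitive characters according to the size of the Dirichlet polynomials $\mathcal{P}_j(\chi)$ with a pointwise inequality and two applications of H\"older's inequality, in the spirit of the upper bounds principle of Radziwi{\l\l} and Soundararajan. For each primitive $\chi$ modulo $q$, let $v(\chi) \in \{0, 1, \ldots, R\}$ denote the largest integer such that $|\mathcal{P}_j(\chi)| \leq \ell_j/60$ for every $1 \leq j \leq v(\chi)$, with the convention $v(\chi)=0$ when $|\mathcal{P}_1(\chi)| > \ell_1/60$. The sets $T_v = \{\chi : v(\chi)=v\}$ partition the primitive characters modulo $q$. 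For brevity set
\[
X_v(\chi) = \prod_{j=1}^v |\mathcal{N}_j(\chi, k-1)|^2 \cdot |\mathcal{Q}_{v+1}(\chi, k)|^2, \qquad Y_v(\chi) = \prod_{j=1}^v |\mathcal{N}_j(\chi, k)|^2 \cdot |\mathcal{Q}_{v+1}(\chi, k)|^2,
\]
so that $X_v(\chi)^k Y_v(\chi)^{1-k} = \prod_{j=1}^v |\mathcal{N}_j(\chi, k-1)|^{2k} |\mathcal{N}_j(\chi, k)|^{2(1-k)} \cdot |\mathcal{Q}_{v+1}(\chi,k)|^{2}$.

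The key pointwise estimate I aim for is $|L(\tfrac12, f\otimes\chi)|^{2k} \ll (|L(\tfrac12, f\otimes\chi)|^2 X_v(\chi))^k Y_v(\chi)^{1-k}$ for $\chi \in T_v$, which reduces to the claim that $X_v(\chi)^k Y_v(\chi)^{1-k} \gg 1$ uniformly on $T_v$. For $v<R$, the maximality of $v$ forces $|\mathcal{P}_{v+1}(\chi)| > \ell_{v+1}/60$, whence
\[
|\mathcal{Q}_{v+1}(\chi,k)|^2 \geq (c_k/60)^{2r_k \ell_{v+1}} \geq 1
\]
since $c_k \geq 64$; for $v=R$ one has $\mathcal{Q}_{R+1}(\chi,k) \equiv 1$ by convention. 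Moreover, reusing the approximation $\mathcal{N}_j(\chi,\alpha) = \exp(\alpha \mathcal{P}_j(\chi))(1+O(e^{-\ell_j}))$ established in the proof of Lemma \ref{lem1} (valid when $|\mathcal{P}_j(\chi)| \leq \ell_j/60$ and $|\alpha| \leq 1$), the exponentials cancel because $2k(k-1)+2k(1-k)=0$, giving
\[
\prod_{j=1}^v |\mathcal{N}_j(\chi,k-1)|^{2k}|\mathcal{N}_j(\chi,k)|^{2(1-k)} = \prod_{j=1}^v \bigl(1+O(e^{-\ell_j})\bigr) \gg 1
\]
uniformly in $v$ thanks to \eqref{sumoverell}.

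The remaining steps are mechanical. Summing the pointwise inequality over $\chi \in T_v$ and applying H\"older's inequality with exponents $1/k$ and $1/(1-k)$ yields
\[
\sum_{\chi \in T_v} |L(\tfrac12, f\otimes\chi)|^{2k} \ll \Bigl(\sumstar_{\chi \bmod q} |L(\tfrac12, f\otimes\chi)|^2 X_v(\chi)\Bigr)^k \Bigl(\sumstar_{\chi \bmod q} Y_v(\chi)\Bigr)^{1-k},
\]
where I have enlarged the inner sums from $T_v$ to the full set of primitive characters by positivity of the integrands. Summing now over $0 \leq v \leq R$ and invoking H\"older a second time in the $v$-variable with the same exponents interchanges the $v$-sum with the $k, 1-k$ split and reproduces exactly the bound \eqref{basiclowerbound2}. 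The only conceptual subtlety is the exponential cancellation $2k(k-1)+2k(1-k)=0$ identified above, which is precisely what allows the balanced H\"older split to succeed; the rest is a routine partition argument combined with two applications of H\"older's inequality.
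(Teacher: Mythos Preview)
Your proof is correct and follows essentially the same route as the paper: both identify, for each $\chi$, an index $v=v(\chi)$ at which the product $\prod_{j\le v}|\mathcal N_j(\chi,k-1)|^{2k}|\mathcal N_j(\chi,k)|^{2(1-k)}\cdot|\mathcal Q_{v+1}(\chi,k)|^2$ is $\gg 1$, using exactly the exponential cancellation $2k(k-1)+2k(1-k)=0$ you single out. The only structural difference is that the paper absorbs the $v$-sum pointwise first---observing that $\bigl(\sum_v X_v(\chi)\bigr)^k\bigl(\sum_v Y_v(\chi)\bigr)^{1-k}\ge X_{v(\chi)}(\chi)^k Y_{v(\chi)}(\chi)^{1-k}\gg 1$---and then applies H\"older a single time in $\chi$, whereas you partition, apply H\"older in $\chi$ for each fixed $v$, and then apply H\"older a second time in $v$; both arrive at the same bound.
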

\begin{proof}
  Using arguments similar to those in the proof of Lemma \ref{lem1}, we see that when $|{\mathcal P}_j(\chi)| \le \ell_j/60$,
\begin{align}
\label{prodNlowerbound}
 |\mathcal{N}_j(\chi, k-1)|^{2k}|\mathcal{N}_j(\chi, k)|^{2(1-k)} \geq \big(1- e^{-\ell_j} \big )^2.
\end{align}

   If there exists an integer $0 \leq v \leq R-1$ such that $| \mathcal{P}_j (\chi) | \leq \ell_j/60$ for all $j \leq v$ and that $|
  \mathcal{P}_{v+1} (\chi) | > \ell_{v+1}/60$,  we deduce from the above and the observation that $|{\mathcal Q}_{v+1}(\chi, k)| \geq 1$ when
  $|{\mathcal P}_{v+1}(\chi)| \ge \ell_{v+1}/60$ that
\begin{align*}
 \Big ( \prod^v_{j=1}|\mathcal{N}_j(\chi, k-1)|^{2k}|\mathcal{N}_j(\chi, k)|^{2(1-k)} \Big )|{\mathcal Q}_{v+1}(\chi, k)|^2 \gg 1.
\end{align*}

  If no such $v$ exists, then $| \mathcal{P}_j (\chi) | \leq \ell_j/60$ for all $1 \leq j \leq R$. Thus the estimation
  \eqref{prodNlowerbound} holds for all $j$ and we have
\begin{align*}
 \prod^R_{j=1}|\mathcal{N}_j(\chi, k-1)|^{2k}|\mathcal{N}_j(\chi, k)|^{2(1-k)} \gg 1.
\end{align*}

  In either case, we conclude that
\begin{align*}
 \Big (\sum^R_{v=0}\Big ( \prod^v_{j=1}|\mathcal{N}_j(\chi, k-1)|^2 \Big )|{\mathcal Q}_{v+1}(\chi, k)|^2 \Big )^{k}\Big (\sum^R_{v=0}\Big ( \prod^v_{j=1}|\mathcal{N}_j(\chi, k)|^2\Big ) |{\mathcal Q}_{v+1}(\chi, k)|^2 \Big )^{1-k}  \gg 1.
\end{align*}
   It follows from this that we have
\begin{align*}
 & \sumstar_{\substack{ \chi \shortmod q }} |L(\half, f \otimes \chi)|^{2k} \\
 \ll & \sumstar_{\substack{ \chi \shortmod q }} |L(\half, f \otimes \chi)|^{2k} \Big (\sum^R_{v=0}\Big ( \prod^v_{j=1}|\mathcal{N}_j(\chi, k-1)|^2 \Big )|{\mathcal Q}_{v+1}(\chi, k)|^2 \Big )^{k}
 \times \Big (\sum^R_{v=0}\Big (  \prod^v_{j=1}|\mathcal{N}_j(\chi, k)|^2\Big ) |{\mathcal Q}_{v+1}(\chi, k)|^2 \Big )^{1-k}.
\end{align*}
     Applying H\"older's inequality to the last expression above leads to the estimation given in \eqref{basiclowerbound2} and this completes the proof of the lemma.
\end{proof}

  We apply Lemma \ref{lem1} and Lemma \ref{lem2} to see that in order to prove Theorem \ref{thmlowerbound} and Theorem
   \ref{thmupperbound}, it suffices to establish the following three propositions.
\begin{proposition}
\label{Prop4} With notations as above. We have for $k>0$,
\begin{align*}
\sumstar_{\substack{ \chi \shortmod q }}L(\tfrac{1}{2}, f \otimes \chi) \mathcal{N}(\overline{\chi}, k) \mathcal{N}(\chi, k-1) \gg \phis(q)(\log q)^{ k^2
} .
\end{align*}
\end{proposition}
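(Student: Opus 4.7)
The strategy is to follow Stefanicki's method for the twisted first moment. First, I would apply the approximate functional equation \eqref{lstapprox} to $L(\hhalf, f \otimes \chi)$ with a parameter $X$ chosen as a small positive power of $q$, writing $L(\hhalf, f \otimes \chi) = A(\chi) + \iota_\chi B(\chi)$, where $A(\chi)$ has effective length $n \leq q/X$. Next, I would expand
\[
\mathcal{N}(\chi, k-1)\mathcal{N}(\overline{\chi}, k) = \sum_{a, b} \frac{\alpha_{k-1}(a)\,\alpha_k(b)}{\sqrt{ab}}\,\chi(a)\,\overline{\chi}(b),
\]
where the coefficients $\alpha_{k-1}, \alpha_k$ come from the truncated exponentials $\mathcal{N}_j$ and are supported on smooth integers whose prime factors lie in $\bigcup_{j} P_j$, with exponent bounded by $\ell_j$ in each block. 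The construction of $\{\ell_j\}$ together with \eqref{sumoverell} guarantees that $a, b \ll q^{O(1/\ell_R)}$, so these Dirichlet polynomials are very short.

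Interchanging sums and invoking orthogonality of primitive characters modulo $q = q_0^\nu$, the contribution of $A(\chi)$ is dominated by the diagonal $na = b$: provided $X$ is chosen so that $(q/X)\cdot q^{O(1/\ell_R)} < q$, the congruence $na \equiv b \pmod{q}$ forces the genuine equality. The resulting main diagonal term is
\[
\phis(q)\sum_{a,n}\frac{\lambda(n)\,\alpha_{k-1}(a)\,\alpha_k(an)}{an}\,W\!\left(\frac{nX}{q}\right),
\]
plus negligible corrections coming from an inclusion-exclusion over the divisors $q_0^j$ of $q$ (needed to isolate primitive characters). The dual piece $\iota_\chi B(\chi)$ is handled by expanding $\tau(\chi)^2$ as a two-variable exponential sum and invoking Weil-type bounds for the Kloosterman sums modulo $q_0^\nu$ that arise; combined with the very short support of $a, b$, this yields a contribution of size $O(\phis(q)\,q^{-\eta})$ for some $\eta > 0$.

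Having isolated the diagonal, the multiplicativity of $\alpha_{k-1}, \alpha_k$ factorises the main term as an Euler product $\phis(q) \prod_{p \in \bigcup_j P_j} L_p$. A short local calculation then identifies the leading contribution $L_p = 1 + c_k \lambda(p)^2/p + O(p^{-2})$ (plus corrections due to the truncations in $\mathcal{N}_j$, which are exponentially small by the same bound \eqref{Ebound} used in the proof of Lemma~\ref{lem1}), and the Rankin-Selberg estimate \eqref{M1} then yields $\log \prod_p L_p = k^2 \log \log q + O(1)$, delivering the claimed bound $\gg \phis(q)(\log q)^{k^2}$. I expect the chief difficulty to be in controlling the dual piece: obtaining a power-saving $q^{-\eta}$ uniformly as $(a, b, n)$ range over the (moderately long) support of $A$ and the mollifier requires square-root cancellation for Kloosterman sums to prime-power moduli, which is precisely where the hypothesis $q = q_0^\nu$ enters.
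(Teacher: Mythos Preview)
Your outline correctly identifies the main term and its Euler-product evaluation, but the choice of the balancing parameter $X$ is oriented the wrong way, and this breaks the treatment of the dual piece. With $X=q^{\delta}$ for small $\delta>0$, the dual sum $B(\chi)=\sum_n \lambda(n)\overline{\chi}(n)n^{-1/2}W(n/(qX))$ has effective length $qX=q^{1+\delta}$. After expanding $\iota_\chi$ and executing the character sum, Weil's bound gives only $\sumstar_\chi \iota_\chi\,\chi(a)\overline{\chi}(mb)\ll q^{1/2+\varepsilon}$ for each fixed $m$, so the dual contribution is
\[
\ll q^{1/2+\varepsilon}\Big(\sum_{a}\frac{|x_a|}{\sqrt a}\Big)\Big(\sum_{b}\frac{|y_b|}{\sqrt b}\Big)\sum_{m\ll qX}\frac{d(m)}{\sqrt m}\ \ll\ q^{1+\delta/2+O(1/10^{M})+\varepsilon},
\]
which exceeds $\phis(q)$ rather than saving a power. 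No amount of ``very short support of $a,b$'' rescues this: the loss comes from the length $qX$ of the $m$-sum, and Weil alone gives no further cancellation there.

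The paper instead takes $X=q^{-16/10^{M}}$, i.e.\ a small \emph{negative} power of $q$. This makes the dual sum short ($qX=q^{1-16/10^{M}}$), and then the Weil bound genuinely yields $O(q^{1-\eta})$. The price is that the direct piece now has length $q/X=q^{1+16/10^{M}}>q$, so the congruence $ma\equiv b\pmod{c}$ no longer forces $ma=b$; one must bound the off-diagonal terms $ma=b+lc$ with $l\ge 1$ trivially (counting $l\le q^{1+2/10^{M}+\varepsilon}/(Xc)$ and using the divisor bound), which gives $\ll X^{-1/2+\varepsilon}q^{1/2+2/10^{M}+\varepsilon}$, again $O(q^{1-\eta})$. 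Once these error terms are disposed of, your Euler-product computation of the diagonal goes through exactly as you describe.
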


\begin{proposition}
\label{Prop5} With notations as above. We have for $0<k < 1$,
\begin{align*}
\max \Big (  \sumstar_{\substack{ \chi \shortmod q }}|L(\tfrac{1}{2},f \otimes \chi)\mathcal{N}(\chi, k-1)|^2, \sumstar_{\substack{ \chi \shortmod q }}|L(\tfrac{1}{2},f \otimes \chi)|^2 \sum^{R}_{v=0}\Big (\prod^v_{j=1}|\mathcal{N}_j(\chi, k-1)|^{2}\Big ) |{\mathcal
 Q}_{v+1}(\chi, k)|^2 \Big )   \ll
\phis(q)(\log q)^{ k^2 }.
\end{align*}
\end{proposition}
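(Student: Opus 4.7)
The plan is to bound both terms in the maximum simultaneously by reducing to a twisted second moment of $|L(\tfrac 12, f \otimes \chi)|^2$ against a short Dirichlet polynomial, then invoking the off-diagonal evaluation from \cite[Section 12]{BM15}. Since $\mathcal{Q}_{R+1}(\chi, k) = 1$ by convention, the first quantity $\sumstar_\chi |L(\tfrac 12, f \otimes \chi) \mathcal{N}(\chi, k-1)|^2$ is exactly the $v = R$ term of the sum in the second, so it suffices to prove for each $0 \leq v \leq R$ that
\begin{equation*}
S_v := \sumstar_{\chi \bmod q} |L(\tfrac 12, f \otimes \chi)|^2 \Bigl(\prod_{j=1}^{v} |\mathcal{N}_j(\chi, k-1)|^2\Bigr) |\mathcal{Q}_{v+1}(\chi, k)|^2 \ll \phis(q)(\log q)^{k^2} c_v,
\end{equation*}
with $c_v$ decaying fast enough in $v$ that $\sum_v c_v \ll_k 1$.

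To estimate $S_v$, I would first expand $D_v(\chi) := \prod_{j=1}^v |\mathcal{N}_j(\chi, k-1)|^2 \cdot |\mathcal{Q}_{v+1}(\chi, k)|^2$ as a Dirichlet polynomial $\sum_{a, b} \alpha_v(a, b)\, \chi(a) \overline\chi(b)$, where $\alpha_v(a,b)$ is supported on integers built from primes in $P_1 \cup \cdots \cup P_{v+1}$ with $a, b \leq q^{\sum_{j \leq v} 1/\ell_j \, + \, r_k/\ell_{v+1}} \leq q^{(2 + r_k)/\ell_R}$, which is $\leq q^\epsilon$ for any $\epsilon > 0$ by \eqref{sumoverell} and the choice of $M$. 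Applying the approximate functional equation \eqref{lsquareapprox} to $|L|^2$, interchanging summations, and using orthogonality of primitive characters modulo $q$ splits $S_v$ into a diagonal part (from indices with $ma = nb$ and $(mnab, q) = 1$) and an off-diagonal remainder (from $ma \equiv nb \pmod q$ with $ma \neq nb$). The off-diagonal contribution is controlled by \cite[Section 12]{BM15}: the combined length of $D_v$ together with the effective lengths from the functional equation is much smaller than $q^{2-\delta}$, producing a power saving $O(\phis(q) q^{-\delta})$ that is negligible.

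For the diagonal main term, Mellin inversion of $\mathcal{W}$ produces a contour integral whose Dirichlet series factors as an Euler product. At each prime $p \in \bigcup_{j \leq v+1} P_j$ the local factor combines contributions from $|\mathcal{N}_j|^2$ (of size $(k-1)^2 \lambda(p)^2/p$), the cross term with the functional-equation sum (of size $2(k-1) \lambda(p)^2/p$), and the $|L|^2$ self-term (of size $\lambda(p)^2/p$), summing to $k^2 \lambda(p)^2/p + O(p^{-2})$. Multiplied by the Rankin-Selberg $L(1, \sym f)$-factor from the remaining primes and evaluated using Lemma \ref{RS}, this produces a bound of order $(\log q)^{k^2}$. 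The $|\mathcal{Q}_{v+1}|^2$ factor in $D_v$ contributes a $(2 r_k \ell_{v+1})$-th moment of a short Dirichlet polynomial normalized by $\ell_{v+1}^{2 r_k \ell_{v+1}}$; by the choice of $c_k, r_k$ in \eqref{r_k}, a Gaussian-type estimate yields a $c_v$ decaying rapidly enough in $\ell_{v+1}$ to allow summation over $v \leq R$.

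The principal obstacle is the Euler-product evaluation of the diagonal main term: one must verify that the intricate combinatorial weights generated by the truncated exponentials $E_{\ell_j}$ and the high-power $\mathcal{Q}_{v+1}$ reproduce the heuristic exponent $k^2$ that would arise from $\exp(k \sum_p \lambda(p) \chi(p)/\sqrt p)$. This requires (i) showing that truncation errors from $E_{\ell_j}$ are negligible, thanks to the rapid growth of the sequence $\{\ell_j\}$ recorded in \eqref{sumoverell}, and (ii) applying moment bounds for short Dirichlet polynomials, calibrated precisely through the constants in \eqref{r_k}, so that the $|\mathcal{Q}_{v+1}|^2$ contribution summed over $v$ remains within the target bound.
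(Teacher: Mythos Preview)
Your proposal is correct and follows essentially the same route as the paper: expand the mollifier as a short Dirichlet polynomial, apply the approximate functional equation \eqref{lsquareapprox}, split into diagonal and off-diagonal via character orthogonality, kill the off-diagonal with \cite[Section 12]{BM15}, and evaluate the diagonal by Mellin inversion and an Euler-product computation yielding the exponent $k^2$ together with a factor decaying like $e^{-\ell_{v+1}/2}$ from the $|\mathcal{Q}_{v+1}|^2$ piece. The only cosmetic difference is that the paper first extends the sum from primitive characters to all characters modulo $q$ by positivity before invoking orthogonality, which slightly streamlines the congruence condition.
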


\begin{proposition}
\label{Prop6} With notations as above. We have for $k>0$,
\begin{align*}
\max \Big ( \sumstar_{\substack{ \chi \shortmod q }}\prod^R_{j=1}\big ( |{\mathcal N}_j(\chi, k)|^2+ |{\mathcal Q}_j(\chi,k)|^2 \big ),  \sumstar_{\substack{ \chi \shortmod q }} \sum^{R}_{v=0} \Big ( \prod^v_{j=1}|\mathcal{N}_j(\chi, k)|^{2}\Big )|{\mathcal
 Q}_{v+1}(\chi, k)|^2 \Big )   \ll \phis(q)(\log q)^{ k^2 }.
\end{align*}
\end{proposition}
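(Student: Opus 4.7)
The plan is to reduce both expressions to diagonal contributions via orthogonality of primitive characters modulo $q = q_0^{\nu}$, then evaluate using the prime estimates of Lemma~\ref{RS}. Unfolding the definitions, $|\mathcal{N}_j(\chi,k)|^2$ is supported on integers of size at most $q^{2/\ell_j}$, and $|\mathcal{Q}_j(\chi,k)|^2$ on integers of size at most $q^{2r_k/\ell_j}$; hence by \eqref{sumoverell} the full product appearing in either sum of the proposition has support at most $q^{4r_k/\ell_R} = q^{o(1)}$, since $\ell_R > 10^M$ may be taken arbitrarily large. For primitive characters modulo a prime power and $(mn,q)=1$, orthogonality reads
\begin{align*}
\sumstar_{\substack{\chi \shortmod q}}\chi(m)\overline{\chi}(n) = \phi(q)\mathbf{1}_{q \mid (m-n)} - \phi(q/q_0)\mathbf{1}_{(q/q_0) \mid (m-n)}.
\end{align*}
For $\nu\geq 2$ with $q_0$ large, the short support forces $m=n$ and only the diagonal survives. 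For $\nu=1$ the off-diagonal contribution equals $-|D(1)|^2$, where $D$ denotes the underlying Dirichlet polynomial and $D(1)$ is its value at the trivial character; estimates via Stirling and the prime number theorem yield $|D(1)|^2 \ll q^{o(1)}$, which is negligible compared with the target bound $\phis(q)(\log q)^{k^2}$.

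Once reduced to the diagonal, the disjointness of the prime ranges $P_j$ allows the evaluation to factor over $j$. For each factor, a standard combinatorial expansion of $|\mathcal{P}_j(\chi)|^{2r}$ on the diagonal yields an upper bound $\leq r!\bigl(\sum_{p\in P_j}1/p\bigr)^r$ from squarefree configurations, with strictly lower-order corrections from repeated primes. Summing in the expansion of $|E_{\ell_j}(k\mathcal{P}_j(\chi))|^2$ then gives
\begin{align*}
\sumstar_{\substack{\chi \shortmod q}}|\mathcal{N}_j(\chi,k)|^2 = (1+o(1))\phis(q)\exp\Bigl(k^2\sum_{p\in P_j}\frac{1}{p}\Bigr),
\end{align*}
after verifying that truncating the Taylor series at level $\ell_j$ contributes error $\ll e^{-\ell_j}$ (using $\ell_j \gg \log\log q$). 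The same combinatorics combined with Stirling's formula yields
\begin{align*}
\sumstar_{\substack{\chi \shortmod q}}|\mathcal{Q}_j(\chi,k)|^2 \ll \phis(q)\Bigl(\frac{c_k^2 r_k \sum_{p\in P_j}1/p}{e\ell_j}\Bigr)^{r_k\ell_j} \ll \phis(q)e^{-\ell_j},
\end{align*}
since $\sum_{p\in P_j}1/p \ll \log\log q \ll \ell_j/N$ for $N$ chosen sufficiently large.

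For the first sum in the proposition, expanding the product gives $2^R$ terms indexed by subsets $S\subseteq\{1,\ldots,R\}$; each factorizes as $\phis(q)\prod_{j\notin S}\exp(k^2\sum_{p\in P_j}1/p)\cdot\prod_{j\in S}O(e^{-\ell_j})$, and summation over $S$ combined with the Mertens-type identity $\sum_{p\leq q^{1/\ell_R^2}}1/p=\log\log q+O(1)$ yields $\ll\phis(q)(\log q)^{k^2}$. For the second sum, each $v<R$ contributes $\phis(q)(\log q)^{k^2}e^{-\ell_{v+1}}$, while the $v=R$ term (with $\mathcal{Q}_{R+1}=1$) gives the main term $(1+o(1))\phis(q)(\log q)^{k^2}$, and the geometric decay in $\ell_{v+1}$ controls the summation in $v$. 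The main technical obstacle lies in the precise bookkeeping of non-squarefree configurations on the diagonal and showing that their cumulative effect across all $j$ remains subdominant; this should be routine given the separation of prime ranges and the lower bounds on the $\ell_j$.
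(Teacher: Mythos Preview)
Your proposal is correct and follows the same strategy as the paper: character orthogonality forces the diagonal (the combined Dirichlet polynomial has length $<q$), which then factors over the disjoint prime ranges $P_j$ and is controlled via \eqref{boundsforsumoverp} and Stirling. The paper streamlines your orthogonality step by first bounding $\sumstar_{\chi}\leq\sum_{\chi\bmod q}$ over \emph{all} characters, so that exact orthogonality applies and no case analysis on $\nu$ or estimation of the $-|D(1)|^2$ term is needed; in fact that term is nonpositive, so for an upper bound you could have discarded it immediately rather than bounding it by $q^{o(1)}$. One notational point: in view of \eqref{5.1} and \eqref{sqinN} the diagonal prime sums carry a factor $\lambda^2(p)$, so the relevant quantity is $\sum_{p\in P_j}\lambda^2(p)/p$ rather than $\sum_{p\in P_j}1/p$; by \eqref{M1} this does not affect the final bound.
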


   We shall prove the above propositions in the rest of the paper.

\section{Proof of Proposition \ref{Prop4}}
\label{sec 4}

    We define $\widetilde{\lambda}(n)$ to be the completely multiplicative function $\widetilde{\lambda}(p)=\lambda(p)$ on primes $p$, we also
    define $w(n)$ to be the multiplicative function such that
    $w(p^{\alpha}) = \alpha!$ for prime powers $p^{\alpha}$.
    We further denote $\Omega(n)$ for the number of prime powers dividing $n$ and let $b_j(n), 1 \leq j \leq R$ be functions such that $b_j(n)$ only takes
    values $0$ or $1$ and $b_j(n)=1$ if and only if $\Omega(n) \leq \ell_j$ and the primes dividing $n$ are all from the interval $P_j$.
    We use these notations to write ${\mathcal N}_j(\chi, \alpha)$ as
\begin{equation}
\label{5.1}
{\mathcal N}_j(\chi, \alpha) = \sum_{n_j} \frac{\widetilde{\lambda}(n_j)}{\sqrt{n_j}} \frac{\alpha^{\Omega(n_j)}}{w(n_j)}  b_j(n_j) \chi(n_j), \quad 1\le j\le R.
\end{equation}
    Note that each ${\mathcal N}_j(\chi, \alpha)$ is a short Dirichlet polynomial since $b_j(n_j)=0$ unless $n_j \leq
    (q^{1/\ell_j^2})^{\ell_j}=q^{1/\ell_j}$. We then deduce that both ${\mathcal N}(\chi, k)$ and ${\mathcal N}(\chi, k-1)$ are short Dirichlet
    polynomials whose lengths are both at most $q^{1/\ell_1+ \ldots +1/\ell_R} < q^{2/10^{M}}$ by \eqref{sumoverell}.
    Note further that we have $\widetilde{\lambda}_f(n_j) \leq 2^{\Omega(n_j)}$ since $\lambda_f(p) \leq 2$ and $b_j(n_j)$ restricts $n_j$
     to satisfy $\Omega(n_j) \leq \ell_j$. This observation allows us to also write ${\mathcal N}_j(\chi, \alpha)$ as
\begin{align}
\label{Nj}
  {\mathcal N}_j(\chi, \alpha)=\sum_{n_j \leq q^{1/\ell_j}}\frac{c_{n_j}b_j(n_j)}{\sqrt{n_j}}\chi(n_j),
\end{align}
   where we have, for some constant $B_0(\alpha)$ depending on $\alpha$ only,
\begin{align*}
  |c_{n_j}| \leq B_0(\alpha)^{\ell_j}.
\end{align*}

  We apply \eqref{Nj} to write for simplicity that
\begin{align*}
 {\mathcal N}(\chi, k-1)= \sum_{a  \leq q^{2/10^{M}}} \frac{x_a}{\sqrt{a}} \chi(a), \quad \mathcal{N}(\overline{\chi}, k) = \sum_{b  \leq
 q^{2/10^{M}}} \frac{y_b}{\sqrt{b}}\overline{\chi}(b),
\end{align*}
    where for some constant $B(k)$ depending on $k$ only, we have
\begin{align}
\label{xybounds}
 x_a, y_b \ll B(k)^{\sum^R_{j=1}\ell_j}  \ll B(k)^{R\ell_1} \ll q^{\varepsilon}.
\end{align}
  The last estimation above follows from \eqref{sumoverell}.

  We now deduce from Lemma \ref{PropDirpoly} that
\begin{align}
\label{Lfirstmoment}
\begin{split}
& \sumstar_{\substack{ \chi \shortmod q }}L(\tfrac{1}{2},f \otimes \chi) \mathcal{N}(\overline{\chi}, k) \mathcal{N}(\chi, k-1) \\
= & \sumstar_{\substack{ \chi \shortmod q }}\sum_{m}\frac {\lambda(m)\chi(m)}{\sqrt{m}}\mathcal{N}(\overline{\chi}, k) \mathcal{N}(\chi,
k-1) W\left(\frac {mX}{q}\right)+\sumstar_{\substack{ \chi \shortmod q }}\iota_{\chi} \sum_{m}\frac {\lambda(m)\overline\chi(m)}{\sqrt{m}}\mathcal{N}(\overline{\chi}, k) \mathcal{N}(\chi,
k-1) W\left(\frac {m}{qX}\right).
\end{split}
\end{align}

  We denote $\mu$ for the M\"obius function and note the following relation (see \cite[(2.3)]{Stefanicki})
\begin{align}
\label{sumchistar}
 & \sumstar_{\substack{ \chi \shortmod q }}\chi(a)=\sum_{c | (q, a-1)}\mu(q/c)\phi(c).
\end{align}
   In particular, setting $a=1$ above implies that
\begin{align}
\label{chistar}
 & \phis(q)=\sum_{c | q}\mu(q/c)\phi(c).
\end{align}

  We apply \eqref{sumchistar} to see that the right side expression in \eqref{Lfirstmoment} equals to
\begin{align*}
 \sum_{c | q}\mu(q/c)\phi(c) \sum_{a} \sum_{b} \sum_{\substack{am \equiv b \bmod c}}\frac {\lambda(m) x_a y_b}{\sqrt{abm}}W\left(\frac {mX}{q}\right)+\sum_{a} \sum_{b} \sum_{\substack{m}}\frac {\lambda(m)x_ay_b}{\sqrt{abm}}W\left(\frac {m}{qX}\right)\sumstar_{\substack{ \chi \shortmod q }}\iota_{\chi}\chi(a)\overline \chi(mb).
\end{align*}

  We evaluate the last summation above by applying the definition of $\iota_{\chi}$ to see that
\begin{align*}
 \sumstar_{\substack{ \chi \shortmod q }}\iota_{\chi}\chi(a)\overline \chi(mb) = \frac {i^{\kappa}}{q}  \sumstar_{\substack{ \chi \shortmod q }} \chi(a)\overline \chi(mb)  \sumstar_{\substack{ v \shortmod q }} \chi(v) S(1,v,q)=\frac {i^{\kappa}}{q}\sum_{c | q}\mu(q/c)\phi(c)\sumstar_{\substack{ v \shortmod q \\ av \equiv mb \shortmod c} } S(1,v,q),
\end{align*}
  where $S$ is the Kloosterman's sum defined by
\begin{align*}
  S(u,v, q)=\sumstar_{\substack{ h \shortmod q }} \exp (\frac {uh+v\overline h}{q}).
\end{align*}

   It follows from the well-known Weil's bound for Kloosterman's sum (see \cite[Corollary 11.20]{iwakow}) that we have
\begin{align*}
  |S(1,v, q)| \leq d(q)q^{1/2}.
\end{align*}

   We then deduce from this that
\begin{align*}
 \sumstar_{\substack{ \chi \shortmod q }}\iota_{\chi}\chi(a)\overline \chi(mb) \leq \frac {1}{q}\sum_{c | q}\phi(c)\frac {q}{c} d(q)q^{1/2} \ll q^{\half+\varepsilon}.
\end{align*}

  The above then implies that we have
\begin{align}
\label{Firstmomentsum2}
& \sum_{a} \sum_{b} \sum_{\substack{m}}\frac {\lambda(m)x_ay_b}{\sqrt{abm}}W\left(\frac {m}{qX}\right)\sumstar_{\substack{ \chi \shortmod q }}\iota_{\chi}\chi(a)\overline \chi(mb)
 \ll  q^{4/10^{M}}q^{1/2+\varepsilon}X^{\varepsilon}\sqrt{qX}.
\end{align}

  It remains to evaluate
\begin{align}
\label{Firstmomentsum1}
 \sum_{c | q}\mu(q/c)\phi(c) \sum_{a} \sum_{b} \sum_{\substack{am \equiv b \bmod c}}\frac {\lambda(m) x_a y_b}{\sqrt{abm}}W\left(\frac {mX}{q}\right).
\end{align}

  We first consider the contribution from the terms $am=b+l c$ with $l \geq 1$ above (note that we may take $M$ large enough
  so that we can not have $b > am$ in our case). By the rapid decay of $W(x)$ given in \eqref{W}, we may assume that $m \leq (q/X)^{1+\varepsilon}$. This then implies that $l \leq  q^{1+2/10^{M}+\varepsilon}/(Xc)$, so that we deduce together with the observation that $x_a, y_b \ll q^{\varepsilon}$ that the total
  contribution from these terms is
\begin{align}
\label{Firstmomentsum1nondiag1}
 \ll &  \sum_{c | q} \phi(c)  q^{\varepsilon}X^{\varepsilon} \sum_{b  \leq q^{2/10^{M}}}  \sum_{l \leq q^{1+2/10^{M}+\varepsilon}/(Xc)}\frac {d(b+l c)}{\sqrt{bdl}} \ll X^{-1/2+\varepsilon}q^{\half+2/10^{M}+\varepsilon}.
\end{align}

  We now set $X=q^{-16/10^{M}}$ to see from \eqref{Lfirstmoment}, \eqref{chistar}-\eqref{Firstmomentsum1nondiag1} that we have
\begin{align*}
& \sumstar_{\substack{ \chi \shortmod q }}L(\tfrac{1}{2},f \otimes \chi) \mathcal{N}(\overline{\chi}, k) \mathcal{N}(\chi, k-1)
\gg \phis(q) \sum_{a} \sum_{b} \sum_{\substack{m \leq (q/X)^{1+\varepsilon} \\ am = b }}\frac {\lambda(m) x_a y_b}{\sqrt{abm}}
= \phis(q) \sum_{b} \frac {y_b}{b} \sum_{\substack{a, m \\ am = b }}\lambda(m)x_a,
\end{align*}
  where the last equality above follows from the observation that $b \leq q^{2/10^{M}}<(q/X)^{1+\varepsilon}$.

   Notice that
\begin{align}
\label{sumab}
\begin{split}
\sum_{b} \frac {y_b}{b} \sum_{\substack{a | b }}\lambda(m)x_a=& \prod^R_{j=1}\Big ( \sum_{n_j} \frac{\widetilde{\lambda}(n_j)}{n_j} \frac{k^{\Omega(n_j)}}{w(n_j)}
b_j(n_j)\sum_{n'_j|n_j} \frac{\widetilde{\lambda}(n'_j)(k-1)^{\Omega(n'_j)}}{w(n'_j)}  b_j(n'_j)\lambda(n_j/n'_j) \Big ) \\
=& \prod^R_{j=1}\Big ( \sum_{n_j} \frac{\widetilde{\lambda}(n_j)}{n_j} \frac{k^{\Omega(n_j)}}{w(n_j)}  b_j(n_j)\sum_{n'_j|n_j}  \frac{\widetilde{\lambda}(n'_j)(k-1)^{\Omega(n'_j)}}{w(n'_j)}\lambda(n_j/n'_j)
\Big ),
\end{split}
\end{align}
  where the last equality above follows by noting that $b_j(n_j)=1$ implies that $b_j(n'_j)=1$ for all $n'_j|n_j$.

  We consider the sum above over $n_j$ for a fixed $1 \leq j \leq R$ in \eqref{sumab}. Note that the factor $b_j(n_j)$ restricts $n_j$ to have
  all prime factors in $P_j$ such that $\Omega(n_j) \leq \ell_j$. If we remove the restriction on $\Omega(n_j)$, then the sum becomes
\begin{align}
\label{6.02}
\begin{split}
& \prod_{\substack{p\in P_j }} \Big( \sum_{i=0}^{\infty} \frac{\lambda^i(p)}{p^i} \frac{k^{i}}{i!}\Big ( \sum_{l=0}^{i}
\frac{(k-1)^{l}}{l!} \lambda^l(p)\lambda(p^{i-l})\Big ) \Big)
= \prod_{\substack{p\in P_j }}\Big (1+ \frac {k^2\lambda^2(p)}p+O(\frac 1{p^2}) \Big ).
\end{split}
\end{align}

   On the other hand, using Rankin's trick by noticing that $2^{\Omega(n_j)-\ell_j}\ge 1$ if $\Omega(n_j) > \ell_j$,  we see that the error
   introduced this way does not exceed
\begin{align*}
\begin{split}
 & \sum_{n_j} \frac{\widetilde{\lambda}(n_j)}{n_j} \frac{k^{\Omega(n_j)}}{w(n_j)}2^{\Omega(n_j)-\ell_j}  \sum_{n'_j|n_j}  \frac{\widetilde{\lambda}(n'_j)|1-k|^{\Omega(n'_j)}}{w(n'_j)}
  \\
\le & 2^{-\ell_j} \prod_{\substack{p\in P_j }} \Big( \sum_{i=0}^{\infty} \frac{\lambda^i(p)}{p^i} \frac{(2k)^{i}}{i!}\Big ( \sum_{l=0}^{i}
\frac{|1-k|^{l}}{l!}\lambda^l(p)\lambda(p^{i-l}) \Big ) \Big) \\
\le & 2^{-\ell_j} \prod_{\substack{p\in P_j }} \Big( 1+ \frac {2k(1+|1-k|)\lambda^2(p)}p+O(\frac 1{p^2})\Big) \\
\leq & 2^{-\ell_j/2}\prod_{\substack{p\in P_j }}\Big (1+ \frac {k^2\lambda^2(p)}p+O(\frac 1{p^2}) \Big ),
\end{split}
\end{align*}
  where the last estimation above follows by taking $N$ large enough and the
bound (which is a consequence of Lemma \ref{RS}) that
\begin{align}
\label{boundsforsumoverp}
   \frac 1{4N} \ell_j \leq \sum_{p \in P_j}\frac {\lambda^2(p)}{p} \leq \frac {2}N \ell_j.
\end{align}

  We then deduce from this, \eqref{6.02} and Lemma \ref{RS} that we have
\begin{align*}
& \sumstar_{\substack{ \chi \shortmod q }}L(\tfrac{1}{2},\chi) \mathcal{N}(\overline{\chi}, k) \mathcal{N}(\chi, k-1)
\gg \phis(q) \prod^R_{j=1}\Big (1+O( 2^{-\ell_j/2})\Big )\prod_{\substack{p\in P_j }}\Big (1+ \frac {k^2\lambda^2(p)}p+O(\frac 1{p^2}) \Big ) \gg
\phis(q)(\log q)^{k^2}.
\end{align*}
 This completes the proof of the proposition.

\section{Proof of Proposition \ref{Prop5}}
\label{sec 5}

  Observe that for a fixed integer $v$ such that $1 \leq v \leq R-1$, we have
\begin{align*}
\begin{split}
 & \sumstar_{\substack{ \chi \shortmod q }}|L(\tfrac{1}{2},f \otimes \chi)|^2 \Big (\prod^v_{j=1}|\mathcal{N}_j(\chi, k-1)|^2 \Big )|{\mathcal
 Q}_{v+1}(\chi, k)|^{2}
 \leq  \sum_{\substack{ \chi \shortmod q }}|L(\tfrac{1}{2},f \otimes \chi)|^2 \Big (\prod^v_{j=1}|\mathcal{N}_j(\chi, k-1)|^2 \Big )|{\mathcal
 Q}_{v+1}(\chi, k)|^{2} .
\end{split}
\end{align*}
    Since the sum over $e^{-\ell_j/2}$ converges, we deduce from the above that it remains to show that
\begin{align*}
\begin{split}
 \sum_{\substack{ \chi \shortmod q }}|L(\tfrac{1}{2},f \otimes \chi)|^2 \Big (\prod^v_{j=1}|\mathcal{N}_j(\chi, k-1)|^2 \Big )|{\mathcal
 Q}_{v+1}(\chi, k)|^{2}
 \ll & \phis(q)e^{-\ell_{v+1}/2}(\log q)^{k^2}.
\end{split}
\end{align*}

 Recall the definition of $c_k, r_k$ given in \eqref{r_k}. We further define the function $p_{v+1}(n)$ such that $p_{v+1}(n)=0$ or $1$, and that $p_{v+1}(n)=1$ if and only if $\Omega(n)=r_k\ell_{v+1}$ and all the prime factors of $n$ are from the interval $P_{v+1}$. Using these together with the notations in Section \ref{sec 4}, we see that
\begin{align}
\label{Pexpression}
  {\mathcal Q}_{v+1}(\chi, k)^{r_k\ell_{v+1}} =&  \Big( \frac{c_k  }{\ell_{v+1}}\Big)^{r_k\ell_{v+1}}\sum_{ \substack{ n_{v+1}}} \frac{\widetilde{\lambda}(n_{v+1})}{\sqrt{n_{v+1}}}\frac{(c_k\ell_{v+1})!
  }{w(n_{v+1})}\chi(n_{v+1})p_{v+1}(n_{v+1}).
\end{align}

  Note that $\prod^v_{j=1}\mathcal{N}_j(\chi, k-1){\mathcal
 Q}_{v+1}(\chi, k)$ is a short Dirichlet polynomial whose length does not exceed
$$q^{1/\ell_1+ \ldots +1/\ell_v+r_k/\ell_{v+1}} < q^{2r_k/10^{M}}.$$
  We apply \eqref{5.1}, \eqref{Pexpression} and the above observation to write for simplicity that
$$ \Big (\prod^v_{j=1}|\mathcal{N}_j(\chi, k-1)|^2 \Big )|{\mathcal
 Q}_{v+1}(\chi, k)|^{2} = \Big( \frac{c_k  }{\ell_{v+1}}\Big)^{2r_k\ell_{v+1}}((r_k\ell_{v+1})!)^2 \sum_{a,b \leq q^{2r_k/10^{M}}} \frac{u_a u_b}{\sqrt{ab}}\chi(a)\overline{\chi}(b).$$

   Similar to \eqref{xybounds}, we note that for all $a, b$,
\begin{align}
\label{ubounds}
 u_a, u_b \ll q^{\varepsilon}.
\end{align}

  We apply \eqref{lsquareapprox} to see that
\begin{align}
\label{LNsquaresum}
\begin{split}
& {\sum_{\chi \shortmod q}} |L(\half, f \otimes \chi)|^2\Big (\prod^v_{j=1}|\mathcal{N}_j(\chi, k-1)|^2 \Big )|{\mathcal
 Q}_{v+1}(\chi, k)|^{2} \\
=& 2\Big( \frac{c_k  }{\ell_{v+1}}\Big)^{2r_k\ell_{v+1}}((r_k\ell_{v+1})!)^2 \sum_{a,b \leq q^{2r_k/10^{M}}} \frac{u_a u_b}{\sqrt{ab}} \sum_{m, n} \frac{\lambda(m)\lambda(n)}{\sqrt{mn}} \Wf \left(\frac {mn}{q^2}\right)
{\sum_{\chi}} \chi(ma) \overline{\chi}(nb) \\
=& \phi(q) \Big( \frac {c_k }{\ell_{v+1}}\Big)^{2r_k\ell_{v+1}}((r_k\ell_{v+1})!)^2 \sum_{a,b \leq q^{2r_k/10^{M}}} \frac{u_a u_b}{\sqrt{ab}} \sum_{\substack{m,n \\ (mn, q)=1 \\ ma \equiv nb \,{\rm mod} \, q}}
\frac{\lambda(m)\lambda(n)}{\sqrt{mn}} \Wf \left(\frac {mn}{q^2}\right).
\end{split}
\end{align}

  We now consider the contribution of the terms $ma \neq nb$ in the last expression of \eqref{LNsquaresum}. Due to the rapid decay of $\Wf(x)$ given in \eqref{W}, we may assume that $mn \leq q^{2+\varepsilon}$.  We apply \cite[Lemma 1.6]{BFKMM} to see that there exist two non-negative function ${\mathcal V}_1(x), {\mathcal V}_2(x)$ supported on $[1/2,2]$,  satisfying
\begin{align}
\label{Vbounds}
\begin{split}
 {\mathcal V}^{(j)}_i(x) \ll_{j, \varepsilon} q^{j\varepsilon}.
\end{split}
\end{align}
  Moreover, we have the following smooth partition of unity:
\begin{align*}
\begin{split}
  \sum_{k \geq 0}{\mathcal V}_i(\frac x{2^k})=1, \quad i=1,2.
\end{split}
\end{align*}

  We also note the estimations
\begin{align}
\label{Stirling}
  (\frac ne)^n \leq n! \leq n(\frac ne)^n.
\end{align}

  It follows from this and the definition of $\ell_{v+1}$ given in Section \ref{sec 2'} that we have
\begin{align}
\label{lv1est}
 \Big( \frac {c_k }{\ell_{v+1}}\Big)^{2r_k\ell_{v+1}}((r_k\ell_{v+1})!)^2  \leq (r_k\ell_{v+1})^2 \Big( \frac{c_k r_k }{e } \Big)^{2r_k\ell_{v+1}} \ll q^{\varepsilon}.
\end{align}

  Applying the above, the definition of $\Wf (x)$ given in Lemma \ref{PropDirpoly} and the estimations given in \eqref{ubounds}, we see that the terms $ma \neq nb$ in the last expression of \eqref{LNsquaresum} contributes
\begin{align}
\label{offdiagbounds}
\begin{split}
 \ll & \frac {q^{1+\varepsilon}}{\sqrt{AB}} \sum_{a,b \leq q^{2r_k/10^{M}}}\frac {1}{\sqrt{ab}} \sum_{\substack{A=2^{k_1}, B=2^{k_2} \\ k_1, k_2 \geq 0 \\ AB \leq q^{2+\varepsilon}}} \int\limits_{(\varepsilon)} \Big |
\frac{\Gamma\left(\frac{\kappa}{2} + s \right)^2}{(2\pi )^s \Gamma\left(\frac{\kappa}{2} \right)^2 }\sum_{\substack{ma \neq nb \\ (mn, q)=1 \\ ma \equiv nb \,{\rm mod} \, q}}
\lambda(m)\lambda(n)V_1 \left(\frac {m}{A}\right) V_2 \left(\frac {n}{B}\right)\big (\frac {q}{mn}\big )^{s} \Big |
\frac{|ds|}{|s|},
\end{split}
\end{align}
 where
\begin{align*}
  V_i \left(x \right)=x^{-\half-s}{\mathcal V}_i(x), \quad i=1,2.
\end{align*}
  Due to the rapid decay of $\Gamma(s)$ on the vertical line, we may truncate the integral in \eqref{offdiagbounds} to $\Im(s) \leq (\log 5q)^2$ with a negligible error. This implies that the bounds given in \eqref{Vbounds} are also satisfied by $V_i (i=1,2)$ and that the expression in \eqref{offdiagbounds} can be further bounded by
\begin{align*}
\begin{split}
 \sum_{\substack{A=2^{k_1}, B=2^{k_2} \\ k_1, k_2 \geq 0 \\ AB \leq q^{2+\varepsilon}}}E(A,B),
\end{split}
\end{align*}
  where $A, B \geq 1$, $AB\leq q^{2+\varepsilon}$ and that
\begin{align*}
\begin{split}
& E(A, B)=\frac {q^{1+\varepsilon}}{\sqrt{AB}} \sum_{a,b \leq q^{2r_k/10^{M}}}\frac {1}{\sqrt{ab}}  \Big |\sum_{\substack{ma \neq nb \\ (mn, q)=1 \\ ma \equiv nb \,{\rm mod} \, q}}
\lambda(m)\lambda(n) V_1 \left(\frac {m}{A}\right) V_2 \left(\frac {n}{B}\right) \Big |.
\end{split}
\end{align*}

  We are thus led to estimate $E(A, B)$ for integers $A, B \geq 1, AB \leq q^{2+\varepsilon}$ and functions $V_1, V_2$ satisfying \eqref{Vbounds}.
In fact, this work has already been done in \cite[Section 12]{BM15} and
it follows from the result given in the first display below \cite[(12.7)]{BM15} that we have upon setting $X=q^{2r_k/10^{M}}, \theta=0$ there that $E(A,B) \ll q^{1-\varepsilon}$. This implies that  the contribution of the terms $ma \neq nb$ in the last expression of \eqref{LNsquaresum} is negligible.

 It remains to consider the terms $ma = nb$ in the last expression of \eqref{LNsquaresum}. We write $m = \frac{\alpha b}{(a,b)}, n =
 \frac{\alpha a}{(a,b)}$ and apply the estimation in \eqref{lv1est} to see that these terms are
\begin{align}
\label{mainterm}
\ll & \phi(q)(r_k\ell_{v+1})^2 \Big( \frac{c_k r_k }{e } \Big)^{2r_k\ell_{v+1}} \sum_{a,b \leq q^{2r_k/10^{M}}} \frac{(a,b)}{ab} u_a u_b  \sum_{(\alpha, q)=1} \frac{\lambda(\frac{\alpha b}{(a,b)})\lambda(\frac{\alpha a}{(a,b)})}{\alpha} \Wf \left(\frac{\alpha^2
ab}{q^2(a,b)^2}\right).
\end{align}

 To evaluate the last sum above, we first recall that the Rankin-Selberg $L$-function of $L(s, f \times f)$ of $f$ is defined for $\Re(s)>1$ by (see \cite[(23.24)]{iwakow})
\begin{align*}
 L(s, f \times f)=\sum_{n \geq 1}\frac {\lambda^2(n)}{n^s}.
\end{align*}
  It is known (see \cite[p. 132]{iwakow}) that $L(s, f \times f)$ has a simple at $s=1$. In fact, we have (see \cite[(5.97)]{iwakow})
\begin{align*}
  L(s, f \times f)=\zeta(s)L(s, \operatorname{sym}^2 f),
\end{align*}
  where $L(s, \operatorname{sym}^2 f)$ is the symmetric square $L$-function of $f$ defined for $\Re(s)>1$ by (see \cite[(25.73)]{iwakow})
\begin{align*}
 L(s, \operatorname{sym}^2 f)=& \zeta(2s) \sum_{n \geq 1}\frac {\lambda(n^2)}{n^s}=\prod_{p}(1-\frac {\lambda(p^2)}{p^s}+\frac {\lambda(p^2)}{p^{2s}}-\frac {1}{p^{3s}} )^{-1}.
\end{align*}
  It follows from a result of G. Shimura \cite{Shimura} that the corresponding completed $L$-function
\begin{align*}
 \Lambda(s, \operatorname{sym}^2 f)=& \pi^{-3s/2}\Gamma (\frac {s+1}{2})\Gamma (\frac {s+\kappa-1}{2}) \Gamma (\frac {s+\kappa}{2}) L(s, \operatorname{sym}^2 f).
\end{align*}
  is entire and satisfies the functional equation
$\Lambda(s, \operatorname{sym}^2 f)=\Lambda(1-s, \operatorname{sym}^2 f)$.
 Combining this with \cite[(5.8)]{iwakow} and apply the convexity bounds (see \cite[Exercise 3, p.
  100]{iwakow}) for $L$-functions, we deduce that
\begin{align}
\label{Lsymest}
\begin{split}
  L(s, \operatorname{sym}^2 f) \ll & \left( 1+|s| \right)^{\frac {3(1-\Re(s))}{2}+\varepsilon}, \quad 0 \leq \Re(s) \leq 1.
\end{split}
\end{align}

  Note also the following convexity bound for $\zeta(s)$:
\begin{align}
\label{zetaest}
\begin{split}
  \zeta(s) \ll & \left( 1+|s| \right)^{\frac {1-\Re(s)}{2}+\varepsilon}, \quad 0 \leq \Re(s) \leq 1.
\end{split}
\end{align}

 We now evaluate the last sum in \eqref{mainterm} by setting $X=q^2(a,b)^2/(ab)$ there and applying the definition of $\Wf (x)$ given in Lemma \ref{PropDirpoly} to obtain that
\begin{align}
\label{maintermint}
& \sum_{(\alpha, q)=1} \frac{\lambda(\frac{\alpha b}{(a,b)})\lambda(\frac{\alpha a}{(a,b)})}{\alpha}  \Wf \left(\frac{\alpha^2}{X}\right)= \frac{1}{2\pi i} \int\limits_{(c)}
\frac{\Gamma\left(\frac{\kappa}{2} + s \right)^2}{(2\pi )^s \Gamma\left(\frac{\kappa}{2} \right)^2 }L(1+2s, f \times f) H(1+2s; q)G(1+2s;a,b)X^{s}
\frac{ds}{s},
\end{align}
  where $H(s;q)=\prod_{p|q}(1-\frac {\lambda^2(p)}{p^{1+2s}}+ O(\frac {1}{p^{2(1+2s)}}))$,  $G(s; a, b)=\prod_{p}G_p(s;a,b)$ with
\begin{equation*}
\begin{split}
G_{p}(s;a,b) =\left\{
 \begin{array}
  [c]{ll}
  \lambda(p^l)+ O(\frac {1}{p^{1+2s}}) & \text{if } p|  ab/(a,b)^2,\\
  1  &\text{otherwise }.
 \end{array}
 \right.
 \end{split}
\end{equation*}

  We evaluate the integral in \eqref{maintermint} by shifting the line of integration to $\Re(s)=-1/4+\varepsilon$. We encounter a double pole at $s=0$ in the
  process. Note that on the new line, we have for some constant $B_1$,
\begin{align*}
\begin{split}
 G(-\frac 14 +\varepsilon; a,b) \ll B_1^{\Omega(q)+\Omega(\frac {a}{(a,b)}+\Omega(\frac {b}{(a,b)})}d\Big(\frac {a}{(a,b)}\Big)d\Big(\frac {b}{(a,b)}\Big) \ll q^{\varepsilon},
\end{split}
\end{align*}
  where the last estimation above can be obtained using arguments that lead to \eqref{xybounds}.

  Combining the above with \eqref{Lsymest}, \eqref{zetaest} and the rapid decay of $\Gamma(s)$ when $|\Im(s)| \rightarrow \infty$, we deduce that the integration on the new line is
\begin{align*}
\begin{split}
  \ll q^{-\half+\varepsilon}.
\end{split}
\end{align*}
  Applying this in \eqref{mainterm} and taking note of the  definition of $\ell_{v+1}$ given in Section \ref{sec 2'}, we see that the contribution of the integration on the new line to the right side of \eqref{mainterm} is
\begin{align*}
\begin{split}
  \ll \phi(q)(r_k\ell_{v+1})^2 \Big( \frac{c_k r_k }{e } \Big)^{2r_k\ell_{v+1}} q^{4r_k/10^{M}-\half+\varepsilon} \ll q^{\half+4r_k/10^{M}+\varepsilon} \ll q^{1-\varepsilon}.
\end{split}
\end{align*}

   We now evaluate the corresponding residue to see that
\begin{align}
\label{alphasum}
& \sum_{(\alpha, q)=1} \frac{\lambda(\frac{\alpha b}{(a,b)})\lambda(\frac{\alpha a}{(a,b)})}{\alpha}  \Wf \left(\frac{\alpha^2}{X}\right)= C_1(q)L(1, \operatorname{sym}^2 f)G(0;a,b) \Big (\log X+2\sum_{p|ab/(a,b)^2}\frac {G'_p(1;a,b)}{G_p(1;a,b)}+C_2(q) \Big )+O( q^{-\half+\varepsilon}),
\end{align}
   where $C_1(q), C_2(q)$ are some constants depending on $q$ only, satisfying $C_i(q) \ll 1, i=1,2$.

  We apply \eqref{alphasum} to evaluate \eqref{mainterm} to see that we may ignore the contribution of the error term in \eqref{alphasum} so
  that the expression in \eqref{mainterm} is
\begin{align*}
 \ll & \phi(q) (r_k\ell_{v+1})^2 \Big( \frac{c_k r_k }{e } \Big)^{2r_k\ell_{v+1}} \\
& \times \sum_{a,b \leq q^{2r_k/10^{M}}} \frac{(a,b)}{ab} u_a u_b  G(0;a,b)\Big ( 2\log q+2 \log (a, b)-\log a -\log b+2\sum_{p|ab/(a,b)^2}\frac {G'_p(0;a,b)}{G_p(0;a,b)}+C_2(q) \Big ).
\end{align*}

  As the estimations are similar, it suffices to treat the sum
\begin{align}
\label{sumoverlog}
\begin{split}
& \phi(q)(r_k\ell_{v+1})^2 \Big( \frac{c_k r_k }{e } \Big)^{2r_k\ell_{v+1}} \sum_{a,b } \frac{(a,b)G(0;a,b)}{ab} u_a u_b \log a \\
=& \phi(q)(r_k\ell_{v+1})^2 \Big( \frac{c_k r_k }{e } \Big)^{2r_k\ell_{v+1}} \sum_{p \in \bigcup^{v+1}_{j=1} P_j}\sum_{l_1 \geq 1, l_2 \geq 0}\frac {l_1 \log p}{p^{l_1+l_2-\min (l_1, l_2)}}\sum_{\substack{ a,b \\ (ab, p)=1} } \frac{(a,b)G(0;p^{l_1}a,p^{l_2}b) u_{p^{l_1}a} u_{p^{l_2}b}}{ab} .
\end{split}
\end{align}

   We now consider the last sum above for fixed $p=p_1, l_1, l_2$. Without loss of generality, we may assume that $p_1 \in P_1$. We then define for $(n_1n'_1, p_1)=1$,
\begin{equation*}
 v_{n_1} = \frac{1}{n_1} \frac{(k-1)^{\Omega(n_1)}\widetilde{\lambda}(n_{1})}{w(n_1)}  b_1(n_1p_1^{l_1}), \quad v_{n'_1} = \frac{1}{n'_1} \frac{(k-1)^{\Omega(n'_1)}\widetilde{\lambda}(n'_{1})}{w(n'_1)}  b_1(n'_1p_1^{l_2}).
\end{equation*}
  For $2 \leq j \leq v$,
\begin{equation*}
 v_{n_j} = \frac{1}{n_j} \frac{(k-1)^{\Omega(n_j)}\widetilde{\lambda}(n_{j})}{w(n_j)}  b_j(n_j), \quad  v_{n'_j} = \frac{1}{n'_j} \frac{(k-1)^{\Omega(n'_j)}\widetilde{\lambda}(n'_{j})}{w(n'_j)}  b_j(n'_j).
\end{equation*}
  Also,
\begin{equation*}
 v_{n_{v+1}} = \frac{1}{n_{v+1}} \frac{\widetilde{\lambda}(n_{v+1})}{w(n_{v+1})}  p_{v+1}(n_{v+1}), \quad  v_{n'_{v+1}} = \frac{1}{n'_{v+1}} \frac{\widetilde{\lambda}(n'_{v+1})}{w(n'_{v+1})}  p_{v+1}(n'_{v+1}).
\end{equation*}

  Then one checks that
\begin{align}
\label{doublesum}
 \sum_{\substack{ a,b \\ (ab, p_1)=1} } \frac{(a,b)G(0;p^{l_1}a,p^{l_2}b) u_{p^{l_1}a} u_{p^{l_2}b}}{ab}=\frac {(k-1)^{l_1+l_2}\lambda^{l_1+l_2}(p_{1})G(0;p^{l_1},p^{l_2})}{l_1!l_2!}\prod^{v+1}_{j=1}\Big ( \sum_{\substack{n_j, n'_j \\ (n_1n'_1, p_1)=1}}(n_j, n'_j)G(0;n_j,n'_j)v_{n_j}v_{n'_j} \Big ).
\end{align}

   As in the proof of Proposition \ref{Prop4}, we remove the restriction of $b_1(n_1p_1^{l_1})$ on $\Omega(n_1p_1^{l_1})$
   and $b_1(n'_1p_1^{l_2})$ on $\Omega(n'_1p_1^{l_2})$ to see that the sum on the right side in \eqref{doublesum} for $j = 1$  becomes
\begin{align*}
  & \sum_{\substack{n_1, n'_1\\ (n_1n'_1, p_1)=1}}(n_1, n'_1)G(0;n_1,n'_1)\frac{\widetilde{\lambda}(n_{1})}{n_1} \frac{(k-1)^{\Omega(n_1)}}{w(n_1)} \frac{\widetilde{\lambda}(n'_{1})}{n'_1} \frac{(k-1)^{\Omega(n'_1)}}{w(n'_1)} =
  \prod_{\substack{p \in P_1 \\ p \neq p_1}}\Big ( 1+\frac{(2(k-1)+(k-1)^2)\lambda^2(p)}{p} +O(\frac 1{p^2})  \Big ) \\
\ll &  \exp (\sum_{p \in P_1}\frac{(k^2-1)\lambda^2(p)}{p}+O(\sum_{p \in P_1}\frac 1{p^2})).
\end{align*}

  Further, we notice that in this case we have $2^{\Omega(n)+l_i-\ell_1}\ge 1$ for $i=1,2$.  Thus, we apply Rankin's
  trick to see that the error introduced this way is
\begin{align*}
 \ll &  \big(2^{l_1}+2^{l_2} \big ) 2^{-\ell_1} \sum_{n_1, n'_1}\frac {(n_1, n'_1)|G(0;n_1,n'_1)|}{n_1n'_1} \frac{|k-1|^{\Omega(n_1)}|\widetilde{\lambda}(n_{1})|}{w(n_1)} \frac{|k-1|^{\Omega(n'_1)}2^{\Omega(n'_1)}|\widetilde{\lambda}(n'_{1})|}{w(n'_1)}.
\end{align*}

   We then deduce that
\begin{align*}
 &  \sum_{l_1 \geq 1, l_2 \geq 0} \frac {l_1 \log p_1}{p^{l_1+l_2-\min (l_1, l_2)}_1}\frac {(k-1)^{l_1+l_2}}{l_1!l_2!}\sum_{\substack{n_1, n'_1
 \\ (n_1n'_1, p_1)=1}}(n_1, n'_1)G(0;n_1,n'_1)v_{n_1}v_{n'_1} \\
 \ll & \sum_{l_1 \geq 1, l_2 \geq 0} \frac {l_1 \log p_1}{p^{l_1+l_2-\min (l_1, l_2)}_1} \frac {(k-1)^{l_1+l_2}}{l_1!l_2!} \Big (1+O \big ( \big (2^{l_1}+2^{l_2} \big ) 2^{-\ell_1/2} \big ) \Big )  \exp (\sum_{p \in P_1}\frac{(k^2-1)\lambda^2(p)}{p}+O(\sum_{p \in P_1}\frac 1{p^2})) \\
 \ll & \Big (1+O \big (  2^{-\ell_1/2} \big )\Big ) \Big ( \frac{\log p_1}{p_1} + O(\frac
 {\log p_1}{p^2_1}) \Big )   \exp (\sum_{p \in  P_1}\frac{(k^2-1)\lambda^2(p)}{p}+O(\sum_{p \in  P_1}\frac 1{p^2})).
\end{align*}

   Similar estimations carry over to the sums over $n_j, n'_j$ for $2 \leq j \leq v$ in \eqref{doublesum}. To treat the sum over $n_{v+1}, n'_{v+1}$,  we apply Rankin's trick again to see that the sum is
\begin{align*}
\ \ll &  (c_kr_k)^{-2r_k\ell_{v+1}} \sum_{\substack{n_{v+1}, n'_{v+1} \\ p |
n_{v+1}n'_{v+1} \implies  p\in P_{v+1} } }\frac {(n_{v+1}, n'_{v+1})|\widetilde{\lambda}(n_{v+1})\widetilde{\lambda}(n'_{v+1})G(0;n_{v+1},n'_{v+1})|}{n_{v+1}n'_{v+1}} \frac{(c_kr_k)^{\Omega(n_{v+1})}}{w(n_{v+1})} \frac{(c_kr_k)^{\Omega(n'_{v+1})}}{w(n'_{v+1})}.
\end{align*}
  By taking $N$ large enough, we deduce from this that
\begin{align*}
\begin{split}
&  (r_k\ell_{v+1})^2 \Big( \frac{c_k r_k }{e } \Big)^{2r_k\ell_{v+1}} \sum_{\substack{n_{v+1}, n'_{v+1} \\ (n_{v+1}n'_{v+1}, p_1)=1}}(n_{v+1}, n'_{v+1})v_{n_{v+1}}v_{n'_{v+1}}
\ll  e^{-\ell_{v+1}}\exp (\sum_{p \in P_{v+1}}\frac{(k^2-1)\lambda^2(p)}{p}+O(\sum_{p \in P_{v+1}}\frac 1{p^2})).
\end{split}
\end{align*}

   It follows from the above discussions that we have
\begin{align}
\label{sumpbound}
\begin{split}
&  (r_k\ell_{v+1})^2 \Big( \frac{c_k r_k }{e } \Big)^{2r_k\ell_{v+1}} \sum_{l_1 \geq 1, l_2 \geq 0}\frac {l_1 \log p_1}{p^{l_1+l_2-\min (l_1, l_2)}_1}\frac {(k-1)^{l_1+l_2}}{l_1!l_2!}\sum_{\substack{ a,b \\ (ab, p_1)=1} } \frac{u_{p^{l_1}_1a}
u_{p^{l_2}_1b}}{ab} \\
 \ll & e^{-\ell_{v+1}}\prod^v_{j=1}\big(1+O(2^{-\ell_j/2})\big)\exp (\sum_{p \in \bigcup^{v+1}_{j=1} P_j}\frac{(k^2-1)\lambda^2(p)}{p}+O(\sum_{p \in \bigcup^{v+1}_{j=1} P_j}\frac 1{p^2})) \times \Big ( \frac{\log p_1}{p_1} + O(\frac
 {\log p_1}{p^2_1}) \Big ).
\end{split}
\end{align}

  We now apply \eqref{boundsforsumoverp} and the observation $\ell_j > \ell^2_{j+1}>2\ell_{j+1}$ to see that
\begin{align}
\label{sumpbound1}
 \sum_{p \in \bigcup^R_{j=v+2} P_j}\frac{|k^2-1|\lambda^2(p)}{p} \leq \sum^{R}_{j=v+2}\sum_{p \in P_j}\frac{|k^2-1|\lambda^2(p)}{p} \leq \frac {2|k^2-1|}{N} \sum^{R}_{j=v+2} \ell_j \leq \frac {2|k^2-1|\ell_{v+2}}{N} \sum^{\infty}_{j=0} \frac 1{2^j} \leq \frac {2|k^2-1|\ell_{v+1}}{N}.
\end{align}

   It follows from \eqref{sumpbound1} that the last expression in \eqref{sumpbound} is
\begin{align*}
\begin{split}
 \ll &  e^{-\ell_{v+1}/2}\exp (\sum_{p \in \bigcup^{R}_{j=1} P_j}\frac{(k^2-1)\lambda^2(p)}{p}+O(\sum_{p \in \bigcup^{R}_{j=1} P_j}\frac 1{p^2})) \times \Big ( \frac{\log p_1}{p_1} + O(\frac
 {\log p_1}{p^2_1}) \Big ).
\end{split}
\end{align*}

 We then conclude from the above, \eqref{sumoverlog}, Lemma \ref{RS} and the observation that $\phi(q) \ll \phis(q)$ that
\begin{align*}
\begin{split}
& \sum_{\substack{ \chi \shortmod q }}|L(\tfrac{1}{2},f \otimes \chi)|^2 \Big (\prod^v_{j=1}|\mathcal{N}_j(\chi, k-1)|^2 \Big )|{\mathcal
 Q}_{v+1}(\chi, k)|^{2} \\
\ll &    \phi(q) e^{-\ell_{v+1}/2}\exp (\sum_{p \in \bigcup^{R}_{j=1} P_j}\frac{(k^2-1)\lambda^2(p)}{p}+O(\sum_{p \in \bigcup^{R}_{j=1} P_j}\frac 1{p^2})) \times \sum_{p \in \bigcup^{v+1}_{j=1}P_j} \Big ( \frac{\log p}{p} + O(\frac
 {\log p}{p^2}) \Big ) \\
\ll & \phis(q) e^{-\ell_{v+1}/2}(\log q)^{k^2}.
\end{split}
\end{align*}
  This completes the proof of the proposition.

\section{Proof of Proposition \ref{Prop6}}

   As the proofs are similar, we shall only prove here that
\begin{align*}
  \sumstar_{\substack{ \chi \shortmod q }}\prod^R_{j=1}\big ( |{\mathcal N}_j(\chi, k)|^2+ |{\mathcal Q}_j(\chi,k)|^2 \big ) \ll \phis(q)(\log q)^{ k^2 }.
\end{align*}

   We first note that
\begin{align}
\label{upperboundprodofN}
\begin{split}
 & \sumstar_{\substack{ \chi \shortmod q }}\prod^R_{j=1}\big ( |{\mathcal N}_j(\chi, k)|^2+ |{\mathcal Q}_j(\chi,k)|^2 \big )
\leq  \sum_{\substack{ \chi \shortmod q }}  \prod^R_{j=1}\big ( |{\mathcal N}_j(\chi, k)|^2+|{\mathcal Q}_j(\chi,k)|^2 \big ).
\end{split}
\end{align}

  We shall take $M$ large enough so that we may deduce from \eqref{sumoverell} that
\begin{align*}
  (2 r_k+2) \sum^R_{j=1}\frac 1{\ell_j} \leq \frac {4(r_k+1)}{\ell_R} <1 .
\end{align*}

  We then apply \eqref{5.1} and \eqref{Pexpression} in the last sum of \eqref{upperboundprodofN} and deduce from the above that
  the orthogonality relation for characters modulo $q$ implies that only the
  diagonal terms in the last sum of \eqref{upperboundprodofN} survive. Thus we obtain that
\begin{align}
\label{maintermbound}
\begin{split}
 & \sum_{\substack{ \chi \shortmod q }}  \prod^R_{j=1}\big ( |{\mathcal N}_j(\chi, k)|^2+ |{\mathcal Q}_j(\chi,k)|^2 \big )
\le  \phi(q) \prod^R_{j=1} \Big (  \sum_{n_j} \frac{k^{2\Omega(n_j)}\widetilde{\lambda}^2(n_j)}{n_j w^2(n_j)}  b_j(n_j)  + \Big( \frac{c_k} {\ell_j}\Big)^{2r_k\ell_j}((r_k\ell_j)!)^2 \sum_{ \substack{ \Omega(n_j) = r_k\ell_j \\ p|n_j \implies  p\in P_j}} \frac{\widetilde{\lambda}^2(n_j) }{n_j w^2(n_j)} \Big ).
\end{split}
\end{align}

  Arguing as before, we see that
\begin{align}
\label{sqinN}
\begin{split}
  \sum_{n_j} \frac{k^{2\Omega(n_j)}\widetilde{\lambda}^2(n_j)}{n_j w^2(n_j)}  b_j(n_j)=\Big(1+ O\big(2^{-\ell_j/2} \big ) \Big)
  \exp (\sum_{p \in P_j} \frac {k^2\lambda^2(p)}{p}+ O(\sum_{p \in P_j} \frac {1}{p^2})).
\end{split}
\end{align}
   Note also that, as $w^2(n) \geq w(n)$ and $\widetilde{\lambda}^2 (n) \geq 0$, we have
\begin{align*}
\begin{split}
 \sum_{ \substack{ \Omega(n_j) = r_k\ell_j \\ p|n_j \implies  p\in P_j}} \frac{\widetilde{\lambda}^2 (n_j)}{n_j w^2(n_j)}  \leq  \frac 1{(r_k \ell_j)!} \Big (\sum_{p \in P_j}\frac {\lambda^2(p)}{p} \Big )^{r_k\ell_j}.
\end{split}
\end{align*}

  Now, we apply the estimations given in \eqref{boundsforsumoverp} and \eqref{Stirling}  to deduce from the above that by taking $M, N$ large enough,
\begin{align}
\label{Qest}
\begin{split}
 &  \Big( \frac{c_k} {\ell_j}\Big)^{2r_k\ell_j}((r_k\ell_j)!)^2 \sum_{ \substack{ \Omega(n_j) = r_k\ell_j \\ p|n_j \implies  p\in P_j}} \frac{\widetilde{\lambda}^2(n_j) }{n_j w^2(n_j)} \ll r_k\ell_j\Big( \frac{c^2_k r_k }{e\ell_j} \Big)^{r_k\ell_j}\Big (\sum_{p \in P_j}\frac {\lambda^2(p)}{p} \Big )^{r_k\ell_j} \\
\ll &  r_k\ell_j\Big( \frac{c^2_k r_k }{e\ell_j} \Big)^{r_k\ell_j}e^{r_k\ell_j \log (2\ell_j/N)}
\ll  e^{-\ell_j} \exp (\sum_{p \in P_j} \frac {k^2\lambda^2(p)}{p}+ O(\sum_{p \in P_j} \frac {1}{p^2})).
\end{split}
\end{align}
  Applying \eqref{sqinN} and \eqref{Qest} in \eqref{maintermbound}, together with Lemma \ref{RS} and the observation that $\phi(q) \ll \phis(q)$,  we readily deduce the assertion of the proposition.

\vspace*{.5cm}

\noindent{\bf Acknowledgments.} P.G. is supported in part by NSFC grant 11871082, X. He is supported in part by NSFC grant 12101427 and X. Wu is supported in part by NSFC grant 11871187.

\bibliography{biblio}
\bibliographystyle{amsxport}

\vspace*{.5cm}

\vspace*{.5cm}

\noindent\begin{tabular}{p{6cm}p{6cm}p{6cm}}
School of Mathematical Sciences & College of Mathematical Sciences & School of Mathematics   \\
Beihang University  &    Sichuan University & Hefei University of Technology \\
Beijing 100191 China    & Chengdu Sichuan 610016 China & Hefei Anhui 230009 China \\
Email: {\tt penggao@buaa.edu.cn} & Email: {\tt hexiaoguangsdu@gmail.com} & Email: {\tt xswu@amss.ac.cn} \\
\end{tabular}

\end{document}